\documentclass[notitlepage,leqno,12pt]{article}

\textwidth 14cm \oddsidemargin -0.01cm
\addtolength{\textheight}{2cm} \addtolength{\topmargin}{-1cm}
\usepackage{latexsym}
\usepackage{amsmath}
\usepackage{amsfonts}
\usepackage{amssymb}
\usepackage{amsthm}
\usepackage{color}

\renewcommand{\a}{\alpha}
\renewcommand{\b}{\beta}
\renewcommand{\d}{\delta}
\newcommand{\D}{\Delta}

\newcommand{\e}{\varepsilon}
\newcommand{\g}{\gamma}

\newcommand{\Ric} {{\rm Ric}}

\newcommand{\n}{\nabla}
\newcommand{\var}{\varphi}

\newcommand{\s}{\sigma}

\newcommand{\ov}{\overline}

\newcommand{\be}{\begin{equation}}
\newcommand{\ee}{\end{equation}}

\newcommand{\R}{\mathbb{R}}

\newcommand{\Z}{\mathbb{Z}}

\renewcommand{\P}{\mathbb{P}}

\newcommand{\N}{\mathbb{N}}

\newcommand{\la}{\langle}
\newcommand{\ra}{\rangle}
\newcommand{\Riem}{\mathrm{Riem}}
\newcommand{\Sc}{\mathrm{Sc}}
\newcommand{\Cat}{\mathrm{Cat}}

\newcommand{\rd}{\mathrm{d}}

\newcommand{\nab}{\nabla}

\newcommand{\cO}{\mathfrak{O}}

\DeclareMathOperator{\Hess}{\mathrm{Hess}\,}

\newtheorem{thm}{Theorem}[section]

\newtheorem{lmm}[thm]{Lemma}

\newtheorem{prp}[thm]{Proposition}

\theoremstyle{definition}

\theoremstyle{remark}
\newtheorem{ex}[thm]{Example}
\theoremstyle{remark}
\newtheorem{rem}[thm]{Remark}
\theoremstyle{remark}

\usepackage[margin=1.9cm, bottom=3cm, footskip=1.5cm]{geometry}
\usepackage{layout}

\bibliographystyle{amsalpha}

\author{Norihisa Ikoma 
\\ 
\footnotesize Keio University 
\\
\footnotesize \texttt{ikoma@math.keio.ac.jp}
\and  Andrea Malchiodi \\ \footnotesize Scuola Normale Superiore, Pisa\\
\footnotesize \texttt{andrea.malchiodi@sns.it} 
\and  Andrea Mondino \\ \footnotesize University of Warwick  \\ \footnotesize \texttt{A.Mondino@warwick.ac.uk}}

\title{Foliation by area-constrained Willmore spheres near a non-degenerate critical point of the scalar curvature}

\begin{document}

\hyphenation{un-coun-ta-bly ma-ni-folds ma-ni-fold se-cond eigen-values po-si-ti-ve}

\maketitle

\begin{abstract}
Let $(M,g)$ be a 3-dimensional Riemannian manifold. The goal of the paper it to show that if $P_{0}\in M$ is a non-degenerate critical point of the scalar curvature, then a neighborhood of $P_{0}$ is foliated by area-constrained Willmore spheres. 
Such a foliation is \emph{unique} among foliations by area-constrained Willmore spheres having Willmore energy less than $32\pi$,  moreover it is \emph{regular} in the sense that a suitable rescaling smoothly converges to a round sphere in the Euclidean three-dimensional space. We also establish generic multiplicity of foliations and the first multiplicity result for area-constrained Willmore spheres with prescribed (small) area in a closed Riemannian manifold. The topic has strict links with the Hawking mass. 
\end{abstract}

\begin{center}

\bigskip

\noindent{\it Key Words:}  Willmore functional, Foliation,  Hawking mass, nonlinear fourth order partial differential equations,  Lyapunov-Schmidt reduction.

\bigskip

\centerline{\bf AMS subject classification: } 
49Q10, 53C21, 53C42, 35J60, 83C99.
\end{center}

\section{Introduction}

Let $\Sigma$ be a closed (compact, without boundary) two-dimensional surface, 
$(M,g)$ a 3-dimensional Riemannian manifold and $f: \Sigma \to M$ a smooth immersion. 
The  \emph{Willmore functional} $W(f)$  is defined by 
\be\label{eq:defW}
W(f):=\int_{\Sigma} H^2 \, d\sigma.
\ee
Here $d \sigma$ is the area form induced by $f$, 
$H:= \bar{g}^{ij} A_{ij}$ the mean curvature, 
$\bar{g}_{ij}$ the induced metric and $A_{ij}$ the second fundamental form. 
The immersion $f$ is said to be a \emph{Willmore surface} (or \emph{Willmore immersion}) if it is a critical point of $W$ with respect to normal variations or, equivalently, when it satisfies 
the Euler-Lagrange equation 
\be\label{eq:WillmoreEq}
\Delta_{\bar{g}} H + H |\mathring{A}|^2 + H \Ric(n, n)=0,
\ee
where $\Delta_{\bar{g}}$ is the Laplace-Beltrami operator, 
$\mathring{A}_{ij} := A_{ij} - \frac{1}{2} H \bar{g}_{ij}$ the trace free 
second fundamental form, $n$ a unit normal to $f$ and 
$\Ric$ the Ricci tensor of $(M,g)$. The Willmore equation \eqref{eq:WillmoreEq} is a fourth-order nonlinear elliptic PDE 
in the immersion map $f$. 
\\

The Willmore energy \eqref{eq:defW}  appears not only in mathematics but also 
in various fields of science and technology. 
For example, in biology, it is  
 a special case of the \emph{Helfrich energy} (\cite{H,KMR,S}). 
In general relativity, the \emph{Hawking mass} contains the Willmore functional 
as the main term (see below for the definition of the Hawking mass) and  in String Theory the Polyakov's extrinsic action involves the functional as well.
\\

The Willmore functional was first introduced in the XIXth century in the Euclidean ambient space  by Sophie Germain in her work on elasticity.  Blaschke and Thomsen, in the 1920s-30s,  detected the class of Willmore surfaces as a natural conformally invariant generalization of minimal surfaces. Indeed minimal surfaces are solutions of \eqref{eq:WillmoreEq} 
(as $H \equiv 0$), and the Willmore functional $W$ in the Euclidean space 
is conformally invariant (provided the center of the inversion does not lie on the surface, in which case one has to add a constant depending on the multiplicity of the immersion).

After that, Willmore rediscovered the topic in 1960s. 
He proved that round spheres are the only global minimizers of $W$ 
among all closed immersed surfaces into the Euclidean space (see \cite{W-93}) and he conjectured that 
the Clifford torus and its images under the M\"obius transformations 
are the global minimizers among surfaces of higher genus. 
The Willmore conjecture was recently solved by Marques-Neves \cite{MN} 
through minimax techniques. 
Let us also mention other fundamental works on the Willmore functional in the Euclidean ambient space. 
Simon \cite{SiL} proved the existence of a smooth genus-one minimizer of  $W$ in $\R^m$ and developed a general regularity theory for minimizers. 
The existence of a minimizer for every genus was settled by  Bauer-Kuwert \cite{BK}, Kusner \cite{Kus} and  Rivi\`ere \cite{Riv1,Riv2} who also developed an independent regularity theory holding more generally for stationary points of $W$. 
We also wish to mention the work  by Kuwert-Sch\"atzle \cite{KS} 
on the Willmore flow and by  Bernard-Rivi\`ere \cite{BR} and Laurain-Rivi\`ere \cite{LauRiv}
on bubbling and energy-identities phenomena. 
\\

Let us emphasize that all the aforementioned results concern Willmore  immersions into the \emph{Euclidean space}, or equivalently into a round sphere 
due to the conformal invariance. The literature about Willmore immersions into curved Riemannian manifolds, which has interest in
applications as it might model non-homogeneous environments, is much more recent.
The first existence result in ambient space with non-constant sectional curvature was \cite{M-10}, where  
the third author showed the existence of embedded Willmore spheres 
(Willmore surface with genus equal to zero) 
in a perturbative setting. 
We also refer to \cite{M-JGA} and \cite{CM} in collaboration with Carlotto 
for related results. 
Under the area constraint condition, the existence of Willmore type spheres 
and their properties have been investigated by Lamm-Metzger \cite{LM-10, LM-13}, Lamm-Metzger-Schulze \cite{LMS-11}, and
the third author in collaboration with Laurain \cite{LauMon}. The existence of area-constrained Willmore tori of small size have been recently addressed by the authors of this work in \cite{IMM-1,IMM-2}.

The global problem, i.e. the existence of smooth immersed spheres minimizing quadratic curvature functionals in compact 3-dimensional Riemannian manifolds,  was  studied by the third author in collaboration with Kuwert and Schygulla in \cite{KMS} (see also \cite{MonSch} for the non compact case).  In  collaboration with Rivi\`ere \cite{MR1,MR2}, the third author  developed the necessary tools for the calculus of variations of the Willmore functional in Riemannian manifolds and proved the existence of area-constrained Willmore spheres in homotopy classes  (as well as the existence of Willmore spheres under various assumptions and  constraints).
\\

The present paper, as well as the aforementioned works
\cite{LM-10,LM-13, LMS-11,LauMon,MR1,MR2}, concerns the existence of Willmore spheres 
under area constraint.  Such immersions satisfy the equation
\begin{equation*}\label{eq:ConstrWill}
\Delta_{\bar{g}} H + H |\mathring{A}|^2 + H \Ric(n, n)=\lambda H,
\end{equation*}
for some $\lambda\in \R$ playing the role of Lagrange multiplier. These immersions are strictly related  to the Hawking mass
\begin{equation*}\label{eq:defHawMass}
m_H(f):=\frac{\sqrt{Area(f)}}{64 \pi^{3/2}} \left(16\pi-W(f)\right),
\end{equation*}
in the sense that critical points of the Hawking mass under the area constraint condition 
are equivalent to the area-constrained Willmore immersions. 
We refer to \cite{ChYau,LMS-11} and the references therein 
for more material about the latter topic.
\\

In order to motivate our main theorems, let us discuss more in detail the literature which is closest to our new results.

 \begin{itemize}
\item  Lamm-Metzger \cite{LM-13} proved that, given a closed 3-dimensional Riemannian manofold $(M,g)$, there exists $\varepsilon_0>0$ with the following property: for every $\e\in (0,\varepsilon_0]$ there exists an area-constrained Willmore sphere minimizing the Willmore functional among immersed spheres of area equal to $4 \pi \e^2$.  Moreover, as $\varepsilon \searrow 0$, such area-constrained Willmore spheres concentrate to a maximum point of the scalar curvature and, after suitable rescaling, they converge in $W^{2,2}$-sense to a round sphere.
\item  The above result has been generalized in two ways.  On the one hand Rivi\`ere and the third author in \cite{MR1,MR2} proved that it is possible to minimize the Willmore energy among (bubble trees of possibly branched weak) immersed spheres of fixed area, for every positive value of the area. On the other hand Laurain and the third author in \cite{LauMon} showed that any sequence of area-constrained Willmore spheres with areas converging to zero  and Willmore energy strictly below $32 \pi$ (no matter if they minimize the Willmore energy) have to concentrate  to a critical point of the scalar curvature and, after suitable rescaling, they converge \emph{smoothly} to a round sphere.
\end{itemize}
Some natural questions then arise:
\begin{enumerate}
\item Is it true that around any critical point $P_{0}$ of the scalar curvature one can find a sequence of area-constrained Willmore spheres having area equal to $4 \pi \varepsilon_n^2\to 0$ and concentrating at $P_{0}$?
\item More precisely, can one find a foliation of a neighborhood of  $P_{0}$ made by area-constrained Willmore spheres? 
\item What about  uniqueness/multiplicity?
\end{enumerate}

The goal of the present paper is exactly to investigate the questions 1,2,3 above.  More precisely, on the one hand we reinforce the assumption by asking that $P_{0}$ is a \emph{non-degenerate} critical point of the scalar curvature (in the sense that the Hessian expressed in local coordinates is an invertible matrix);  on the other hand we do not just prove the existence of area-constrained  Willmore spheres concentrating at $P_{0}$ but we show  that there exists a \emph{regular foliation} of a neighborhood of $P_{0}$ made by area-constrained  Willmore spheres. The precise statement is the following.

\begin{thm}\label{thm:foliation}
Let $(M,g)$ be a $3$-dimensional Riemannian manifold and let $P_{0}\in M$ be a non-degenerate critical point of the scalar curvature $\Sc$. Then there exist $\varepsilon_0>0$ and  a neighborhood  $U$ of $P_{0}$ such that $U\setminus \{P_{0}\}$  is foliated by area-constrained Willmore spheres $\Sigma_\varepsilon$ having area  $4\pi \varepsilon^2$, $\varepsilon\in (0,\varepsilon_0)$. More precisely, there is a diffeomorphism $F:S^2\times (0,\varepsilon_0) \to U\setminus \{P_{0}\}$ such that $\Sigma_\varepsilon:=F(S^2,\varepsilon)$ is an area-constrained Willmore sphere  having area equal to   $4\pi \varepsilon^2$. Moreover
\begin{itemize}
\item If the index of $P_{0}$ as a critical point of $\Sc$ is equal to $3-k$ \footnote{The index of a non-degenerate critical point $P_{0}$ of a function $h:M\to \R$ is the number of negative eigenvalues of the Hessian of $h$ at $P_{0}$} , then each surface $\Sigma_\varepsilon$ is an area-constrained critical point of $W$ of  index $k$. 
\item If $\Sc_{P_{0}}>0$ then the surfaces $\Sigma_\varepsilon$ have strictly positive Hawking mass.
\item  The foliation is \emph{regular} at $\varepsilon=0$ in the  following sense. Fix a system of normal coordinates of $U$ centered at $P_{0}$ and indentify $U$ with  an open  subset of $\R^3$;   then, called $F_\varepsilon:=\frac{1}{\varepsilon} F(\cdot, \varepsilon):S^2 \to \R^3$, as $\varepsilon \searrow 0$ the immersions  $F_\varepsilon$ converge smoothly to the round unit sphere of $\R^3$ centered at the origin.
\item The foliation is \emph{unique} in the following sense.  Let  $V \subset U$ be  another neighborhood of $P_{0} \in M$  such that $V\setminus \{P_{0}\}$  is foliated by area-constrained Willmore spheres $\Sigma'_\varepsilon$ having area  $4\pi \varepsilon^2$, $\varepsilon\in (0,\varepsilon_1)$, and satisfying $\sup_{\varepsilon\in (0,\varepsilon_1)} W(\Sigma'_\varepsilon) < 32 \pi$. Then there exists $\varepsilon_2\in (0, \min(\varepsilon_0,\varepsilon_1))$ such that $\Sigma_\varepsilon=\Sigma'_\varepsilon$ for every $\varepsilon \in (0,\varepsilon_2)$.
\end{itemize}
\end{thm}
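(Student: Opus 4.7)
My plan is to carry out a Lyapunov--Schmidt reduction on a finite-dimensional manifold of approximate area-constrained Willmore spheres. For each $(P,\varepsilon)$ with $P$ in a small neighborhood $U$ of $P_0$ and $\varepsilon\in(0,\varepsilon_0)$, the model is obtained by taking, in $g$-normal coordinates centered at $P$, a suitable perturbation of the coordinate sphere of radius $\varepsilon$, adjusted so as to enclose area exactly $4\pi\varepsilon^2$ and to cancel the leading-order error in the Willmore equation. After rescaling the ambient metric by $\varepsilon^{-2}$, each such approximate solution converges smoothly to the round unit sphere in $\R^3$, whose linearized area-constrained Willmore operator has a three-dimensional kernel generated by the constant vector fields corresponding to Euclidean translations; this kernel is the source of the moduli parameter $P$ and is the only ``soft'' direction once the area is fixed.

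The PDE step is then standard. I project the area-constrained Willmore equation onto the complement of the approximate kernel and apply the implicit function theorem to produce, for each $(P,\varepsilon)$, a true solution $\tilde f_{P,\varepsilon}$ of the projected equation, smoothly depending on $(P,\varepsilon)$. The original equation then reduces to the vanishing of the three-dimensional reduced gradient, or equivalently to finding critical points of the reduced functional $\Phi_\varepsilon(P):=W(\tilde f_{P,\varepsilon})$ on $U$. A careful normal-coordinate expansion of the Willmore energy along the model family (in the spirit of Lamm--Metzger and Laurain--Mondino \cite{LauMon}) gives
\begin{equation*}
\Phi_\varepsilon(P)=16\pi - c_1\,\Sc(P)\,\varepsilon^2 + O(\varepsilon^3)\quad\mbox{in } C^2(U),\qquad c_1>0,
\end{equation*}
so that $\varepsilon^{-2}(16\pi-\Phi_\varepsilon)$ converges in $C^2_{\mathrm{loc}}$ to $c_1\Sc$.

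Existence, index count and regularity of the foliation now follow from the non-degeneracy of $P_0$ as a critical point of $\Sc$. By the stability of non-degenerate critical points under small $C^2$-perturbations, $\Phi_\varepsilon$ admits, for every sufficiently small $\varepsilon$, a unique critical point $P_\varepsilon\to P_0$ whose Morse index equals that of $-\Sc$ at $P_0$, namely $k$. Setting $\Sigma_\varepsilon:=\tilde f_{P_\varepsilon,\varepsilon}(S^2)$ produces the required area-constrained Willmore spheres of index $k$, and the expansion above yields $W(\Sigma_\varepsilon)=16\pi-c_1\Sc(P_0)\varepsilon^2+o(\varepsilon^2)$, hence strictly positive Hawking mass when $\Sc_{P_0}>0$. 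The map $F(\omega,\varepsilon):=\tilde f_{P_\varepsilon,\varepsilon}(\omega)$ is smooth in $(\omega,\varepsilon)$ and $F_\varepsilon:=\varepsilon^{-1}F(\cdot,\varepsilon)$ converges smoothly to the round unit sphere by construction; transversality of the $\varepsilon$-direction (at leading order after blow-up, the outward radial dilation of the unit sphere is nowhere tangential to it) upgrades $F$ to a diffeomorphism onto a punctured neighborhood of $P_0$, establishing the foliation property.

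The main obstacle is the uniqueness among foliations with Willmore energy below $32\pi$, since a competing foliation $\{\Sigma'_\varepsilon\}$ is not a priori in the range of our ansatz. I handle this in two steps. First, the bound $\sup_\varepsilon W(\Sigma'_\varepsilon)<32\pi$ rules out bubbling and allows me to invoke the smooth blow-up theorem of Laurain--Mondino \cite{LauMon}: each $\Sigma'_\varepsilon$ concentrates at a single point, which must be $P_0$ because the $\Sigma'_\varepsilon$ foliate a neighborhood of $P_0$, and after rescaling by $\varepsilon^{-1}$ it converges smoothly to the round unit sphere in $\R^3$. Second, this smooth convergence places $\Sigma'_\varepsilon$, for small $\varepsilon$, inside the tubular neighborhood of the approximate-solution manifold on which the Lyapunov--Schmidt reduction is a bijection between true area-constrained Willmore spheres and critical points of $\Phi_\varepsilon$; the local uniqueness of the critical point $P_\varepsilon$ of $\Phi_\varepsilon$ then forces $\Sigma'_\varepsilon=\Sigma_\varepsilon$ for all sufficiently small $\varepsilon$.
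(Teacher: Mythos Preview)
Your overall strategy coincides with the paper's: Lyapunov--Schmidt reduction, the expansion $\Phi_\varepsilon(P)=16\pi-c_1\Sc(P)\varepsilon^2+O(\varepsilon^3)$ in $C^2$, persistence of the non-degenerate critical point, and the use of \cite{LauMon} for uniqueness. The index, Hawking-mass and uniqueness parts are fine and essentially match the paper's arguments.

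There is, however, a genuine gap in the foliation step. From the $C^2$-expansion with error $O(\varepsilon^3)$ you only obtain $|\nabla\Sc(P_\varepsilon)|=O(\varepsilon)$, hence $\mathrm{d}_g(P_\varepsilon,P_0)=O(\varepsilon)$; differentiating $D_P\Phi_\varepsilon(P_\varepsilon)=0$ in $\varepsilon$ then gives only $|dP_\varepsilon/d\varepsilon|=O(1)$. This is not enough: as the paper points out explicitly, round spheres in $\R^3$ of radius $\varepsilon$ centered at $(\varepsilon,0,0)$ concentrate at the origin but are mutually tangent, so they do \emph{not} foliate. Your transversality sentence (``at leading order after blow-up, the outward radial dilation of the unit sphere is nowhere tangential'') tacitly assumes the contribution of $dP_\varepsilon/d\varepsilon$ to $\partial_\varepsilon F$ is lower order after rescaling, which fails when $dP_\varepsilon/d\varepsilon$ is merely bounded. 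The same issue undermines the regularity claim: $F_\varepsilon=\varepsilon^{-1}F(\cdot,\varepsilon)$ is, in normal coordinates at $P_0$, roughly $\varepsilon^{-1}P_\varepsilon+q+O(\varepsilon^2)$, and you need $\varepsilon^{-1}P_\varepsilon\to 0$.

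What is missing is a \emph{sharper} estimate on the gradient of the reduced functional, namely
\[
\Bigl\|D_P\Phi_\varepsilon+c_1\varepsilon^2\,D_P\Sc\Bigr\|_{C^1(U)}\leq C\varepsilon^4,
\qquad
\Bigl\|\partial_\varepsilon\bigl(D_P\Phi_\varepsilon+c_1\varepsilon^2\,D_P\Sc\bigr)\Bigr\|_{C^1(U)}\leq C\varepsilon^3.
\]
The paper obtains this (Lemma~\ref{lmm:geomExp} and Step~1 of the proof) by a parity argument: in the expansion of $H$ and $d\sigma$ on a geodesic sphere the $\varepsilon^3$-terms are \emph{odd} under the antipodal map $q\mapsto-q$, so they integrate to zero and the $O(\varepsilon^3)$-term in $W_g(\Sigma^0_{\varepsilon,P})$ vanishes, yielding an $O(\varepsilon^4)$ remainder. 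With this in hand one gets $\mathrm{d}_g(P_\varepsilon,P_0)\leq C\varepsilon^2$ and $|dP_\varepsilon/d\varepsilon|\leq C\varepsilon$, from which both the diffeomorphism property of $F$ and the regularity at $\varepsilon=0$ follow. You should add this step.
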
    

Foliations by area-constrained Willmore spheres have been recently investigated by  Lamm-Metzger-Schulze \cite{LMS-11} who proved that a non-compact 3-dimensional manifold which is  asymptotically Schwartzschild with positive mass is foliated at infinity by area-constrained Willmore spheres of large area. Even if both ours and theirs construction rely on a suitable application of the Implicit Function Theorem, the two results and proofs are actually quite different. Theorem \ref{thm:foliation} gives a local foliation in a small neighborhood of a point and the driving geometric quantity is the scalar curvature. 
On the other hand, the main result in \cite{LMS-11}  is a foliation at infinity and the driving geometric quantity is the ADM mass of the manifold.   

 Local foliations by spherical surfaces in manifolds have already appeared in the literature, but mostly by constant mean curvature spheres. In particular we have been inspired by the seminal paper of Ye \cite{Ye}, producing a local foliation of constant mean curvature spheres near a non-degenerate critical point of the scalar curvature. On the other hand let us stress the difference between the two problems: finding a foliation by constant mean curvature spheres is a \emph{second} order problem since the mean curvature is a second order elliptic operator, while finding a foliation by area-constrained Willmore spheres  is a \emph{fourth} order problem.
\\

\noindent
Let us also discuss the relevance of Theorem \ref{thm:foliation} in connection with the Hawking mass. From the note
of Christodoulou and Yau \cite{ChYau}, if $(M, g)$ has non-negative scalar curvature then isoperimetric spheres (and
more generally stable CMC spheres) have positive Hawking mass; it is also known (see for
instance \cite{Druet} or \cite{NardAGA}) that, if $M$ is compact, then small isoperimetric regions converge to geodesic spheres
centered at a maximum point of the scalar curvature as the enclosed volume converges to 0. Moreover, from the aforementioned paper of Ye \cite{Ye} it follows that near a  non-degenerate maximum point of the scalar curvature one can find a foliation by stable CMC spheres, which in particular by \cite{ChYau} will have positive Hawking mass.  Therefore a link between Hawking mass and critical
points of the scalar curvature was already present in literature; Theorem \ref{thm:foliation} expresses this relation
precisely. 
\\

\noindent
We also establish the multiplicity of area-constrained Willmore spheres and generic multiplicity of foliations. Let us mention that, despite the rich literature about existence of area-constrained Willmore spheres,  this is the first multiplicity  result  in general Riemannian manifolds (for a prescribed value of the area constraint).

\begin{thm}\label{thm:multS}
Let $(M,g)$ be a closed $3$-dimensional Riemannian manifold. Let 
\begin{itemize}
\item k=2,  if $M$ is simply connected (i.e. if and only if  $M$ is diffeomorphic to $S^3$ by the recent proof of the Poincar\'e conjecture);
\item k=3, if $\pi_1(M)$ is a non-trivial free group;
\item k=4, otherwise.
\end{itemize}
 Then there exists $\varepsilon_0>0$  such that for every $\varepsilon \in (0,\varepsilon_0)$ there exist at least $k$ distinct area-constrained Willmore spheres of area $4\pi \varepsilon^2$. 
\end{thm}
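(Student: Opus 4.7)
The plan is to perform a \emph{global} Lyapunov--Schmidt finite-dimensional reduction over the entire compact manifold $M$ and then apply Lusternik--Schnirelmann category theory to the resulting reduced functional on $M$.

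Building on the local analysis underlying Theorem \ref{thm:foliation}, for each sufficiently small $\varepsilon>0$ and each $P\in M$ I would construct an approximate solution $\Sigma_{P,\varepsilon}$, obtained by taking a geodesic sphere of radius $\varepsilon$ centered at $P$ (in normal coordinates) and correcting it to a suitable order so as to meet the area constraint and to drive the Willmore residual to a higher order in $\varepsilon$. The linearization of the area-constrained Willmore operator at $\Sigma_{P,\varepsilon}$ has an approximate three-dimensional kernel generated by the translations of the center (the Lagrange multiplier absorbs the area constraint). Uniform invertibility on the $L^2$-orthogonal complement, which holds uniformly in $P\in M$ by compactness of $M$ and a continuity argument, yields via the Implicit Function Theorem a smooth family of normal perturbations $\Psi_\varepsilon(P)$ solving the equation modulo the kernel. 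A standard variational bookkeeping then shows that $\Psi_\varepsilon(P)$ is a genuine area-constrained Willmore sphere of area $4\pi\varepsilon^2$ if and only if $P$ is a critical point of a reduced functional $\Phi_\varepsilon\in C^\infty(M)$; following the expansion already exploited in Theorem \ref{thm:foliation},
\[
\Phi_\varepsilon(P)=c_0+c_1\,\varepsilon^2\,\Sc(P)+o(\varepsilon^2)\qquad\text{in } C^2(M),
\]
for universal constants with $c_1\neq 0$.

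Since $M$ is closed and $\Phi_\varepsilon$ is smooth on $M$, the classical Lusternik--Schnirelmann theorem guarantees at least $\mathrm{cat}(M)$ distinct critical points. Distinct critical points $P_1\ne P_2$ yield geometrically distinct area-constrained Willmore spheres because, by construction, $\Psi_\varepsilon(P_i)$ concentrates in an $O(\varepsilon)$-neighborhood of $P_i$. The explicit values $k=2,3,4$ in the statement then follow from the theorem of G\'omez-Larra\~naga and Gonz\'alez-Acu\~na on the Lusternik--Schnirelmann category of closed 3-manifolds: $\mathrm{cat}(S^3)=2$; $\mathrm{cat}(M)=3$ precisely when $\pi_1(M)$ is a non-trivial free group; and $\mathrm{cat}(M)=4$ otherwise.

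The main obstacle is upgrading the local Lyapunov--Schmidt reduction used in Theorem \ref{thm:foliation} to a genuinely \emph{global} one, with uniform estimates as the concentration point $P$ ranges over the whole of $M$. This requires uniform Schauder/$W^{4,2}$ bounds and uniform spectral gap for the linearized fourth-order operator along the entire family $\{\Sigma_{P,\varepsilon}\}_{P\in M}$, smooth dependence of the construction on $P$, and a careful treatment of the approximate translational kernel (so as to compute the reduced functional and identify its leading term with $\Sc$). Once this global reduction is established, the multiplicity statement is a direct consequence of the compactness of $M$ and of the standard Lusternik--Schnirelmann count, combined with the topological classification of $\mathrm{cat}$ for closed 3-manifolds recalled above.
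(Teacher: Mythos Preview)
Your proposal is essentially the paper's own proof: globalize the Lyapunov--Schmidt reduction over all of $M$ by compactness (the paper does this explicitly in Remark~\ref{rem:PhieGlobal}, patching local reductions via the uniqueness of $\varphi_{\varepsilon,P}$), obtain a reduced functional $\Phi_\varepsilon\in C^2(M)$ whose critical points correspond to area-constrained Willmore spheres, and then apply Lusternik--Schnirelmann theory together with the G\'omez-Larra\~naga--Gonz\'alez-Acu\~na computation of $\mathrm{cat}$ for closed 3-manifolds. (Your normalization $\mathrm{cat}(S^3)=2$ differs from the paper's $\Cat(S^3)=1$, but the count of critical points agrees.)

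One point deserves more care. Your argument that distinct critical points $P_1\ne P_2$ of $\Phi_\varepsilon$ yield geometrically distinct surfaces, based on concentration in $O(\varepsilon)$-neighborhoods of the $P_i$, is not sufficient as stated: Lusternik--Schnirelmann theory gives no lower bound on $d(P_1,P_2)$, so for fixed $\varepsilon$ the two $O(\varepsilon)$-neighborhoods may well overlap, and a priori the same surface could be realized as a small normal graph over geodesic spheres centered at two nearby points. The paper closes this gap by invoking the gauge-fixing condition built into the reduction (condition (iii) of Proposition~\ref{303}): the correction $\varphi_{\varepsilon,P}$ is $L^2$-orthogonal to the translational modes $Y_{i,\varepsilon,P}$, which pins down the center $P$ uniquely from the surface. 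You have this orthogonality implicitly in your construction of $\Psi_\varepsilon(P)$ and should appeal to it here rather than to concentration.
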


\begin{rem}
Examples of manifolds having a non-trivial free group as the fundamental group are 
$M = (S^1 \times S^2) \# \cdots \#  (S^1 \times S^2) $ (the connected sum of $m$ copies of $S^1\times S^2$, $m\geq 1$). 
On the other hand, the 3-dimensional real projective space $\R\P^3$ and the 3-torus $S^1\times S^1\times S^1$ are 
instead an example of manifold where $k=4$. 
An expert reader will observe that $k=\Cat(M)+1$, where $\Cat(M)$ is the Lusternik-Schnirelmann category of $M$. 
This is not by chance, indeed Theorem \ref{thm:multS} is proved 
by combining a Lyapunov-Schmidt reduction with the celebrated  Lusternik-Schnirelmann theory.
\end{rem}

\noindent 
We conclude with a remark about  generic multiplicity  of foliations. To this aim note that, fixed a compact manifold $M$, for generic metrics the scalar curvature is a Morse function.

\begin{rem}\label{rem:multFol}
Let $(M,g)$ be a closed 3-dimensional manifold such that the scalar curvature $\Sc:M\to \R$ is a Morse function  and denote with $b_k(M)$ the $k^{th}$ Betti number of $M$, $k=0,\ldots,3$.  Then, by the Morse inequalities,  $\Sc$ has at least $b_k(M)$ non-degenerate critical points of index $k$ and, by Theorem \ref{thm:foliation},  each one of these  points has an associated foliation by area-constrained Willmore spheres of index $3-k$. In particular  there exists $\varepsilon_0>0$ such that, for $\varepsilon \in (0,\varepsilon_0)$, there exist $b_k(M)$ distinct area-constrained Willmore spheres of area $4\pi \varepsilon^2$  and index $3-k$, for $k=0,\ldots,3$; therefore there exist at least $\sum_{k=0}^3 b_k(M)$ distinct area-constrained Willmore spheres of area $4\pi \varepsilon^2$.
\end{rem}

\begin{ex}  Since the Morse inequalities hold by taking the Betti numbers with coefficients in any field, we are free to choose $\R$ or $\Z_2:=\Z/2\Z$ depending on convenience. Let us discuss some basic examples to illustrate the multiplicity statement 
in Remark \ref{rem:multFol}.  
\begin{itemize}
\item $M=S^3$. Then $b_0(M,\R)=b_3(M,\R)=1$, $b_1(M,\R)=b_2(M,\R)=0$ so generically there exits 2 distinct foliations of  area-constrained Willmore spheres.
\item $M=S^2\times S^1$. Then $b_k(M,\R)=1$ for $k=0,\ldots,3$, so generically there exist 4 distinct foliations of  area-constrained Willmore spheres.
\item $M=\R\P^3$. Then $b_k(M,\Z_2)=1$ for $k=0,\ldots,3$, so generically there exist 4 distinct foliations of  area-constrained Willmore spheres.
\item $M=S^1\times S^1\times S^1$. Then $b_k(M,\R)=1$ for $k=0,3$ and $b_k(M,\R)=3$ for $k=1,2$, so generically there exist 8 distinct foliations of area-constrained Willmore spheres.
\end{itemize}
\end{ex}
An announcement of this paper was given  in \cite{IMM-3}. In the independent work \cite{LMS} 
the authors obtained independently some results related to ours.

\subsubsection*{Acknowledgements}
The first author would like to thank Kenta Hayano for discussion on the result of \cite{GG}. 
The authors would like to express their gratitude to Tobias Lamm for letting them know \cite{LMS}. 
The work of the first author was partially supported by JSPS KAKENHI Grant Numbers JP16K17623. 
The third author acknowledges the support of   the EPSRC First Grant  EP/R004730/1.  

% % % % % % % % % % % % % % % % % % % % % % % % % % % % % % % % % %
% % % % % % % % % % % % % % % % % % % % % % % % % % % % % % % % % %
% % % % % % % % % % % % % % % % % % % % % % % % % % % % % % % % % %

\section{Preliminaries}
\label{2000}

% % % % % % % % % % % % % % % % % % % % % % % % % % % % % % % % % %
% % % % % % % % % % % % % % % % % % % % % % % % % % % % % % % % % %
% % % % % % % % % % % % % % % % % % % % % % % % % % % % % % % % % %

	We first recall the definition and properties of the Willmore energy. 
Given a 3-dimensional Riemannian manifold $(M,g)$ and a closed  surface $\Sigma$ immersed in  $M$, 
the Willmore energy $W_g(\Sigma)$ is defined as 
	\[
		W_g(\Sigma) := \int_{\Sigma} H^2 d \s 
	\]
where $H$ is the mean curvature, $H={\rm tr}\,(A)$ where 
$A$ is the second fundamental form of $\Sigma$. 
Here we use the following convention for $A$: 
	\[
		A(X,Y) = g( \nab_X n, Y )
	\]
and $n$ is a  (possibly just locally defined) unit normal to $\Sigma$. 
We also denote by $W_{g_0}$ 
the Willmore energy in the Euclidean space $(\R^3,g_0)$. 
For $W_{g_0}$, we have 
	\begin{prp}[\cite{W-93}]\label{201}
		For any immersed, closed  surface $\Sigma \subset \R^3$, 
		one has 
			\[
				W_{g_0}(\Sigma) \geq 16 \pi = W_{g_0}(S^2)
			\]
		where $S^2 \subset \R^3$ is the standard sphere of unit radius.
	\end{prp}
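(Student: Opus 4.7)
The plan is to follow the classical argument of Willmore that isolates the region of non-negative Gaussian curvature and then applies a Gauss-map/area-formula estimate.

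First I would decompose $H$ and $K$ in terms of the principal curvatures $\kappa_1,\kappa_2$ of the immersion. With the convention $H = \mathrm{tr}\,A = \kappa_1+\kappa_2$, the algebraic identity $H^2 - 4K = (\kappa_1-\kappa_2)^2 \geq 0$ yields the pointwise inequality $H^2 \geq 4K$, with equality exactly at umbilic points. Setting $\Sigma_+:=\{p\in\Sigma : K(p)\geq 0\}$, this gives
\[
W_{g_0}(\Sigma)=\int_\Sigma H^2\, d\sigma \;\geq\; \int_{\Sigma_+} H^2\, d\sigma \;\geq\; 4\int_{\Sigma_+} K\, d\sigma.
\]

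Next I would bound the right-hand side from below by means of the Gauss map $\nu:\Sigma\to S^2$. The key observation is that $\nu(\Sigma_+)=S^2$: for any unit vector $v\in S^2$, the linear height function $p\mapsto \langle p,v\rangle$ attains its maximum on the compact surface $\Sigma$ at some point $p_v$, and at $p_v$ the tangent plane is orthogonal to $v$ and the surface lies locally on one side of it, hence $K(p_v)\geq 0$ and $\nu(p_v)=\pm v$. Since on $\Sigma_+$ one has $|\det d\nu|=|K|=K$, the area formula gives
\[
\int_{\Sigma_+} K\, d\sigma \;=\; \int_{\Sigma_+} |\det d\nu|\, d\sigma \;=\; \int_{S^2} \#\bigl(\nu^{-1}(q)\cap\Sigma_+\bigr)\, d\sigma_{S^2}(q) \;\geq\; |S^2| \;=\; 4\pi.
\]
Plugging this into the previous inequality yields $W_{g_0}(\Sigma)\geq 16\pi$.

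For the equality case, a direct computation shows that for the unit sphere $S^2$ one has $\kappa_1=\kappa_2=1$, hence $H\equiv 2$ and $W_{g_0}(S^2)=\int_{S^2}4\, d\sigma = 16\pi$. I expect the main technical point to watch out for is the justification of the area formula and the Gauss-map argument for a general (possibly only $C^2$) immersion, together with care about local versus global choices of the unit normal $n$; the rest is elementary calculus and algebra.
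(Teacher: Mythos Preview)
Your argument is the classical proof due to Willmore and is correct. Note, however, that the paper does not actually prove this proposition: it is stated with a citation to \cite{W-93} and no proof is given in the paper itself, so there is nothing to compare against beyond observing that your proposal reproduces the standard textbook argument.

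One small point worth tightening: from the height-function argument you only get $\nu(p_v)\in\{v,-v\}$, which a priori shows $\nu(\Sigma_+)$ covers $\mathbb{RP}^2$ rather than all of $S^2$. To conclude $\nu(\Sigma_+)=S^2$ (and hence that the counting function in the area formula is $\geq 1$ almost everywhere on $S^2$), also consider the minimum of $\langle\,\cdot\,,v\rangle$, or equivalently apply the same reasoning to $-v$; the two extremal points together produce preimages with normals $v$ and $-v$, and both have $K\geq 0$. You already flag the orientation/normal issue in your final remark, so this is just a matter of making that step explicit.
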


	Next we recall the first and second variation formulas for $W_g$. 
To be more precise, let $\Sigma \subset M$ be an immersed, closed and 
orientable surface and $F:(-\d,\d) \times \Sigma \to M$ denote 
a perturbation of $\Sigma$ satisfying 
$\partial_t F = \var \, n$ where $n=n(t,p)$ is a unit normal to 
$F(t,\Sigma)$ and $\var := g(n,\partial_t F)$. 
We write $\Riem$ for the Riemann curvature tensor of $M$, 
$\Ric$ the Ricci tensor, 
$\Sc$ the scalar curvature, $\mathring{A}$ 
the traceless second fundamental form, 
$\bar{g}$ the induced metric on $\Sigma$ and 
$\Delta$ the Laplace--Beltrami operator on $(\Sigma,\bar{g})$. 
For $\Riem$, we use the following convention: 
	\[
		\Riem(X,Y)Z = \nab_X \nab_Y Z - \nab_Y \nab_X Z 
		- \nab_{[X,Y]} Z. 
	\]
Moreover, we define a self-adjoint elliptic operator $L$ by 
	\[
		L \varphi := - \Delta \varphi - ( |A|^2 + \Ric(n,n) ) \varphi
	\]
and write $\varpi$ for the tangential component of the one-form 
$\Ric(n,\cdot)$: $\varpi = \Ric(n,\cdot)^t$.
Finally, define the $(2,0)$ tensor $T$ by 
	\[
		T_{ij} = \Riem (\partial_i, n , n , \partial_j) 
		= \Ric_{ij} + G(n,n) \bar{g}_{ij}
	\]
where $G = \Ric - (\frac{1}{2} \Sc) g$ stands for the Einstein tensor of $M$.

	Using these notations, we have the following formulas  
	\begin{prp}[see Section 3 in \cite{LMS-11}]\label{202}
		With the above notation we have
		$$
		  \delta W_g(\Sigma)[\var] := \frac{d}{dt} W_g(F(t,\Sigma)) \Big|_{t=0} 
		  = \int_{\Sigma } \left( L H + \frac{1}{2} H^3 \right) \var \, d \s 
		$$
		and 
		\[
			\begin{aligned}
				 \delta^2 W_g(\Sigma)[\var,\var] 
				 &:= \frac{d^2}{dt^2} W_g(F(t,\Sigma)) \Big|_{t=0} 
				 \\
				 & =  
				 2 \int_{\Sigma } \left[ (L \var)^2 + 
				 \frac{1}{2} H^2 |\n \var |^2 - 2 \mathring{A}(\n \var , \n \var ) \right] 
				 d \s \\ 
		  		& \quad + 2 \int_{\Sigma} \var^2  \bigg( |\n  H|^2_{g} + 2 \varpi(\n H) 
				+ H \D H + 2 g(\n^2 H, \mathring{A}) + 2 H^2 |\mathring{A}|^2_{g} 
				\\
				& \quad +  2 H g (\mathring{A}, T) - H  g (\n_n  \Ric) (n, n) 
				- \frac{1}{2} H^2 |A|^2_{g} - \frac{1}{2} H^2 \Ric(n, n) \bigg) d \s  
				\\
				& \quad + \int_{\Sigma} \left(L H + \frac{1}{2}H^3\right) 
				\left( \frac{\partial \varphi}{\partial t}\bigg|_{t=0} + H \varphi^2 \right) d \s
				\\
		 		& = 2 \int_{\Sigma} \var \tilde{L} \var \, d \s 
		 		+ \int_{\Sigma} \left(L H + \frac{1}{2}H^3\right) 
		 		\left( \frac{\partial \varphi}{\partial t}\bigg|_{t=0} + H \varphi^2 \right) d \s
			\end{aligned}
		\]
		where the fourth-order operator $\tilde{L}$ is defined by 
			\[ 
				\begin{aligned}
					\tilde{L} \var & = L L \var + \frac{1}{2} H^2 L \var 
					+ 2 H g (\mathring{A}, \nabla^2 \var) + 2 H \varpi(\n \var ) 
					+ 2 \mathring{A}(\n \var, \n H) \\
					& \quad +  \var  \Big( |\n H|^2_{g} + 2 \varpi (\n H) + H \D H 
					+ 2 g(\n^2 H, \mathring{A}) \\
					& \qquad \qquad \qquad  
					 + 2 H^2 |\mathring{A}|^2_{g} 
					+ 2 H g(\mathring{A}, T) - H (\n_n \Ric)(n, n) \Big). 
				\end{aligned}
			\]
	\end{prp}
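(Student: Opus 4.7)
The plan is to directly compute the first and second derivatives in $t$ of $t \mapsto W_g(F(t,\Sigma)) = \int_{F(t,\Sigma)} H^2\, d\s$ and organize the resulting terms into the stated tensorial form; this is essentially the content of Section 3 of \cite{LMS-11}. I would start by recording the evolution equations of the basic geometric quantities under the perturbation $\pa_t F = \var n$ (with $\var$ allowed to depend on $t$): $\pa_t \ov g_{ij} = 2\var A_{ij}$, $\pa_t(d\s) = H\var\, d\s$, $\pa_t n = -\n\var$, $\pa_t H = L\var$, and an analogous expression for $\pa_t A_{ij}$ involving $\n^2\var$, the quadratic expression $A^2$ and $\Riem(\cdot,n,n,\cdot)$; these follow from the Weingarten formula and the Gauss--Codazzi relations. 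The first variation is then immediate from the product rule applied to $H^2 d\s$:
\[
\pa_t(H^2 d\s) = \bigl(2H\,L\var + H^3\var\bigr)\, d\s,
\]
so after integrating over $\Sigma$ and exploiting the self-adjointness of $L$ on $(\Sigma,\ov g)$ to move $L$ off $\var$ and onto $H$, one recovers the stated formula for $\delta W_g(\Sigma)[\var]$.

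For the second variation I would differentiate the first variation once more in $t$. Writing $\psi := \pa_t \var$, the product rule applied to $\int_\Sigma (LH + \tfrac12 H^3)\var\, d\s$ produces a \emph{bulk} contribution $\int_\Sigma \pa_t (LH + \tfrac12 H^3)\, \var\, d\s$ together with the term $\int_\Sigma (LH + \tfrac12 H^3)(\psi + H\var^2)\,d\s$, where the factor $\psi + H\var^2$ arises from $\pa_t(\var\, d\s) = (\psi + H\var^2)\,d\s$. The second piece already matches the last line of the claim, so the whole work is concentrated on the bulk term. I would expand
\[
\pa_t\bigl(LH + \tfrac12 H^3\bigr) = L(\pa_t H) + (\pa_t L)H + \tfrac32 H^2\, \pa_t H
\]
and substitute $\pa_t H = L\var$: the piece $LL\var + \tfrac12 H^2 L\var$ is the principal part of $\tilde L\var$ and appears immediately, while the remaining terms of $\tilde L\var$ emerge from the commutator $(\pa_t L)H$ once $\D_{\ov g}$, $|A|^2$ and $\Ric(n,n)$ are differentiated in $t$ using the evolution equations above.

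The main obstacle is the book-keeping in this last step: one has to use $\pa_t \ov g_{ij} = 2\var A_{ij}$ to control $\pa_t \D_{\ov g}$ (producing mixed derivative terms like $2H\,g(\mathring A,\n^2\var)$), the Codazzi identity $\n_i A_{jk} - \n_j A_{ik} = \Riem(\pa_i,\pa_j,n,\pa_k)$ to exchange derivatives of $A$ and of $H$ (producing $2\mathring A(\n\var, \n H)$ and $2H\,\varpi(\n\var)$), and the decomposition $A = \mathring A + \tfrac12 H\,\ov g$ to recognise the tensor $T_{ij} = \Ric_{ij} + G(n,n)\,\ov g_{ij}$ and the one-form $\varpi = \Ric(n,\cdot)^t$ inside the resulting contractions. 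The zeroth-order curvature terms inside the parentheses defining $\tilde L$ arise from differentiating $\Ric(n,n)$ and $|A|^2$ and reorganising using the twice-contracted second Bianchi identity. Once these numerous curvature contractions are carefully grouped, the bulk contribution reassembles into $2\int_\Sigma \var\,\tilde L \var\, d\s$, yielding the compact form stated in the proposition.
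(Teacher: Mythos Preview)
Your proposal outlines the standard computation, and there is nothing mathematically wrong with the approach you sketch. However, there is nothing to compare it against: in the paper Proposition~\ref{202} is stated with the attribution ``see Section~3 in \cite{LMS-11}'' and no proof is given; the authors simply import the formulas from Lamm--Metzger--Schulze and move on. Your outline is precisely the route taken in that reference (evolution equations for $\bar g_{ij}$, $d\sigma$, $n$, $H$, $A_{ij}$ under a normal variation, then product-rule differentiation of $\int H^2\,d\sigma$, self-adjointness of $L$ for the first variation, and a Codazzi/Bianchi bookkeeping exercise for the second), so in effect you are reproducing the cited proof rather than offering an alternative to anything in the present paper.
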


		For  later use, we make some comments 
in the case $(M,g) = (\R^3,g_0)$ and $\Sigma = S^2$. 
In this case, it is easily seen that 
	\[
		\delta W_{g_0}(S^2) = 0, \qquad 
		\delta^2 W_{g_0}(S^2)[\varphi,\varphi] 
		= \int_{S^2} \varphi \tilde{L}_0 \varphi \rd s, 
		\quad \tilde{L}_0 \varphi := \Delta ( \Delta + 2 ) \varphi.
	\]
Furthermore, we see 
	\begin{equation}\label{21}
		 {\rm Ker} \, \tilde{L}_0 = \{ Z_0, Z_1,Z_2, Z_3\} =: \mathcal{K}_0
	\end{equation}
where $\Delta Z_0 = 0 = (\Delta + 2) Z_i$ $(1\leq i \leq 3)$ and 
$Z_j$ ($0\leq j \leq 4$) are given by 
	\begin{equation}\label{22}
		Z_0 (q) \equiv 1 = \frac{H_0(q)}{2}, \quad 
		Z_i (q) = g_0( \mathbf{e}_i, q ) \quad 
		{\rm for}\ q \in S^2, 
	\end{equation}
where $H_0$ is the mean curvature of $S^2$ and 
$\{\mathbf{e}_1,\mathbf{e}_2,\mathbf{e}_3\}$ the canonical basis of $\R^3$. 
These properties will be used in Section \ref{3000}.

% % % % % % % % % % % % % % % % % % % % % % % % % % % % % % % % % %
% % % % % % % % % % % % % % % % % % % % % % % % % % % % % % % % % %
% % % % % % % % % % % % % % % % % % % % % % % % % % % % % % % % % %

\section{Finite-dimensional reduction procedure}
\label{3000}

% % % % % % % % % % % % % % % % % % % % % % % % % % % % % % % % % %
% % % % % % % % % % % % % % % % % % % % % % % % % % % % % % % % % %
% % % % % % % % % % % % % % % % % % % % % % % % % % % % % % % % % %

In this section we perform a Lyapunov--Schmidt reduction  in order
to reduce our problem into a finite-dimensional one, see \cite{AM} for 
a general introduction to this method. 
For most part of this paper we will work on a neighborhood of $P_0 \in M$ 
where $P_0$ is a non-degenerate critical point of $\Sc$, so we start by analyzing this scenario. 
Let us fix such a neighborhood $U_0$ with  
$\overline{U}_0$  compact. 
Next, shrinking $U_0$ if necessary, 
we may find a local orthonormal frame $\{F_{P,1},F_{P,2},F_{P,3}\}_{P \in U_0}$.  By using this frame, we may identify $T_PM$ with $\R^3$ and 
define the exponential map $\exp_P^g: B_{\rho_0}(0) \to M$ 
where $B_{\rho_0}(0)$ is a ball in the Euclidean space and 
$\rho_0>0$ independent of $P \in U_0$. 
We select a neighborhood $V_0$ of $P_0$ and $\e_0>0$ so that 
$\overline{V}_0 \subset U_0$, $\e S^2 \subset B_{\rho_0/2}(0)$ and 
$\exp^{g}_P ( B_{2 \e} (0) ) \subset U_0$ 
for every $0 < \e \leq \e_0$ and $P \in \overline{V}_0$. 
Moreover, since $P_0$ is a non-degenerate critical point of $\Sc$, 
we may assume that 
the Hessian $\Hess (\Sc)$ of $\Sc$ on $\overline{V}_0$ is invertible 
and $P_0$ is the only  critical point of $\Sc$ in $\overline{V}_0$.

		Next, we introduce the following metric $g_\e$ 
which is useful when we observe objects 
satisfying the small area constraint: 
	\[
		g_\e(P) = \frac{1}{\e^2} g(P) \quad {\rm for\ } P \in U_0.
	\]
As  above, $\{\e F_{P,1}, \e F_{P,2} , \e F_{P,3}  \}$ is an orthonormal 
frame for $g_\e$ and we may regard $\exp_{P}^{g_\e}$ as the map 
from some open neighborhood in $\R^3$ into $M$. 
Writing $g_P := (\exp_P^g)^\ast g$ and 
$g_{\e,P} := (\exp_P^{g_\e})^\ast g_\e$ for the pull-backs of $g$ and $g_\e$ 
through the exponential maps, we can check the following: 
(see \cite{IMM-1,LP-87})

	\begin{enumerate}
		\item[(i)] 
		Let $W_{g_\e}$ be the Willmore functional for $(U_0,g_\e)$ and 
		$\Sigma \subset U_0$ be an embedded surface. 
		Denote by $H_g$ and $H_{g_\e}$ the mean curvature of $\Sigma$ 
		in $g$ and $g_\e$, respectively. 
		Then one has 
			\[
				H_{g_\e} = \e H_g, \quad 
				W_{g_\e}(\Sigma) = W_g(\Sigma), \quad 
				W_{g_\e}'(\Sigma) = \e^3 W_{g}'(\Sigma).
			\]
		Therefore, we may see that $\Sigma$ is a Willmore type surface in $(U_0,g)$ 
		if and only if it is so is in $(U_0,g_\e)$. 
	
	\item[(ii)] 
		The exponential map $\exp^{g_\e}_P $ is defined in 
		$B_{\e^{-1} \rho_0}(0)$ and satisfies 
			\[
				\exp^{g_\e}_P(z) = \exp^g_{P}(\e z) \quad 
				{\rm for\ all} \ |z|_{g_0} \leq \e^{-1} \rho_0 \quad 
				{\rm and} \quad P \in U_0.
			\]
		Moreover, $g_{\e,P,\a \b}$ can be expanded as 
			\begin{equation}\label{31}
				g_{\e,P,\a \b} (y) = \delta_{\a \b} + \e^2 h^\e_{P, \a \b} (y)
				\quad {\rm for\ } |y|_{g_0} \leq \e^{-1} \rho_0
			\end{equation}
		and $g_{\e,P,\a \b}$ satisfies 
			\begin{equation}\label{32}
				|y|^{-2} | D_P^k (g_{\e,P,\a\b}(y) - \delta_{\a \b} )| 
				+ |y|^{-1} | D_P^k D_y g_{\e,P,\a\b}(y)| 
				+ \sum_{j=2}^\ell |D_P^k D_y^j g_{\e,P,\a\b}(y) | 
				\leq C_{k,\ell} \e^2
			\end{equation}
		for any $k,\ell \geq 0$ where $D_P, D_y$ stand for derivatives 
		in $P$ and $y$, respectively. 
	\end{enumerate}
Using the metric $g_\e$, we set 
	\[
		\mathcal{T}_{\e,V_0} := \{ \exp_{P}^{g_\e} (S^2) \ |\ P \in \overline{V_0} \}. 
	\]
Notice that $\Sigma \subset U_0$  for each $\Sigma \in \mathcal{T}_{\e,V_0}$, 
by the properties of $U_0$ and $V_0$. 
Due to the above expansion of $g_\e$, elements in $\mathcal{T}_{\e,V_0}$ 
are approximate solutions to our problem. Namely,

	\begin{lmm}\label{301}
For any $k , \ell \in \N$ and $j=0,1$, one has 
	\[
		\left\| D_\e^j D_P^k W_{g_\e}'( \exp_{P}^{g_\e} (S^2)) \right\|_{C^\ell(\Sigma)} 
		\leq C_{k,\ell} \e^{2-j} \quad {\rm for\ all}\ P \in \overline{V_0} 
		\  {\rm and}\ \e \in (0,\e_0)
	\]
where we regard $W_{g_\e}'(\Sigma)$ as a function satisfying 
$\delta W_{g_\e}(\Sigma) [\varphi] = \int_{\Sigma} W_{g_\e}'(\Sigma) \varphi d \sigma$.
	\end{lmm}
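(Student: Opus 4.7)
\begin{pf}[Proof plan]
The plan is to exploit the fact that, after pulling back through the exponential map $\exp_P^{g_\varepsilon}$, the sphere $\exp_P^{g_\varepsilon}(S^2)$ becomes the standard unit sphere $S^2\subset\R^3$ and the ambient metric becomes the perturbation $g_{\varepsilon,P}=\delta+\varepsilon^2 h^\varepsilon_P$ of the Euclidean metric. Since the standard unit sphere is an exact critical point of $W_{g_0}$, the conclusion will follow by expanding in the small parameter $\varepsilon^2$ and invoking the smoothness/uniform bounds provided by (31)--(32).

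First I would rewrite $W'_{g_\varepsilon}(\Sigma)$ explicitly via Proposition \ref{202}, so that
\[
W'_{g_\varepsilon}(\Sigma)=L_{g_\varepsilon} H_{g_\varepsilon}+\tfrac{1}{2}H_{g_\varepsilon}^{3}.
\]
In normal coordinates centered at $P$, the immersion of $\Sigma=\exp_P^{g_\varepsilon}(S^{2})$ is simply $\iota:S^{2}\hookrightarrow\R^{3}$, independent of $\varepsilon$ and $P$, while the ambient metric is $g_{\varepsilon,P}$. Hence $W'_{g_\varepsilon}(\Sigma)$ restricted to $S^2$ is a universal polynomial expression $\Phi$ in the coefficients of $g_{\varepsilon,P}$, its inverse, and their derivatives up to order four (coming from $L$, $H$ and their Laplacians), evaluated along $S^{2}$:
\[
W'_{g_\varepsilon}(\exp_P^{g_\varepsilon}(S^{2}))\Big|_{\iota(q)}=\Phi\!\left(g_{\varepsilon,P}(q),\, D_y g_{\varepsilon,P}(q),\,\ldots,\, D_y^{4} g_{\varepsilon,P}(q)\right),\quad q\in S^{2}.
\]

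Next I would observe that if $\varepsilon=0$ then $g_{\varepsilon,P}=\delta$ (by \eqref{31}) and $\Phi$ reduces to the Willmore operator of $S^{2}$ in Euclidean $\R^{3}$; by Proposition \ref{201} (or equivalently by direct computation, since $H\equiv 2$, $\mathring A\equiv 0$, and $\mathrm{Ric}\equiv0$), this quantity vanishes identically. Writing $g_{\varepsilon,P}-\delta=\varepsilon^{2}h^\varepsilon_P$ and Taylor-expanding $\Phi$ around the Euclidean metric then gives
\[
W'_{g_\varepsilon}(\exp_P^{g_\varepsilon}(S^{2}))=\varepsilon^{2}\,\Psi_{P,\varepsilon}(q),
\]
where $\Psi_{P,\varepsilon}$ is a smooth function of $h^\varepsilon_P$, $D_y h^\varepsilon_P,\ldots,D_y^{4}h^\varepsilon_P$.

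The remaining step is to show that $\Psi_{P,\varepsilon}$ and its $P$- and $y$-derivatives are uniformly bounded on $\overline{V_0}\times(0,\varepsilon_0)\times S^{2}$. This is exactly what \eqref{32} supplies: the bounds on $|y|^{-2}|D_P^{k}(g_{\varepsilon,P,\alpha\beta}-\delta_{\alpha\beta})|$, $|y|^{-1}|D_P^{k}D_y g_{\varepsilon,P,\alpha\beta}|$ and $|D_P^{k}D_y^{j}g_{\varepsilon,P,\alpha\beta}|$ for $j\ge 2$ all carry the factor $\varepsilon^{2}$, which I factor out once, leaving quantities uniformly bounded in $k$ and $\ell$ on the compact set $S^{2}\subset B_{\rho_0/(2\varepsilon)}(0)$. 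Applying the chain and Leibniz rules to the polynomial $\Phi$ then yields the required $C^\ell$-bound for $j=0$. For the case $j=1$, I differentiate once more in $\varepsilon$: since $\partial_\varepsilon g_{\varepsilon,P}=2\varepsilon\, h^\varepsilon_P+\varepsilon^{2}\partial_\varepsilon h^\varepsilon_P$, the smoothness built into the exponential map gives $\partial_\varepsilon g_{\varepsilon,P}=O(\varepsilon)$ in the relevant $C^\ell$ norms (together with bounds on mixed $P$-derivatives), and so $\partial_\varepsilon W'_{g_\varepsilon}(\exp_P^{g_\varepsilon}(S^{2}))=O(\varepsilon)$, matching $\varepsilon^{2-j}$ with $j=1$.

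The main technical point is purely bookkeeping: verifying that no derivative inadvertently drops the $\varepsilon^{2}$ prefactor, which is ensured by the uniform bounds of the form \eqref{32}. There is no genuine obstacle, as everything reduces to smooth dependence of a tensorial polynomial expression on a metric which is a smooth, uniformly small perturbation of the flat one.
\end{pf}
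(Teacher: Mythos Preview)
Your proposal is correct and follows essentially the same approach as the paper: both arguments pull back to $S^2$ via $\exp_P^{g_\varepsilon}$, use the explicit first-variation formula from Proposition~\ref{202}, subtract the vanishing Euclidean Willmore operator $W'_{g_0}(S^2)=0$, and control the difference via the metric expansion \eqref{31}--\eqref{32}. The paper is slightly more concrete in that it records the intermediate estimates \eqref{33} on $g_{\varepsilon,P}^{\alpha\beta}-\delta^{\alpha\beta}$, $\mathrm{Ric}_{\varepsilon,P}$, $A_{\varepsilon,P}-A_0$, and $\Delta_{g_{\varepsilon,P}}-\Delta_{g_0}$, while you package the same content as ``$\Phi$ is a polynomial in the metric and its derivatives,'' but the substance is identical.
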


	\begin{proof}
Denote by $H_{\e,P}$, $A_{\e,P}$ and $\Ric_{\e,P}$
the mean curvature of $\exp_P^{g_\e}(S^2)$, the second fundamental form 
and the Ricci curvature in $(\R^3,g_{\e,P})$. 
Remark that the unit outer normal to $\exp^{g_\e}_P(S^2)$ is given by $n_0(q) = q$ 
($q \in S^2$) by Gauss' lemma. 
We use the notation $H_0$, $A_0$ and so on 
for those in the Euclidean space. 
Recall from Proposition \ref{202} that 
	\[
		W_{g_\e}'( \exp^{g_\e}_P (S^2) ) 
		= - \Delta_{g_{\e,P}} H_{\e,P} - |A_{\e,P} |^2_{g_{\e,P}} H_{\e,P} 
		- H_{\e,P} \Ric_{\e,P}( n_{\e,P}, n_{\e,P} ) + \frac{1}{2} H_{\e,P}^3. 
	\]
By \eqref{31} and \eqref{32}, we observe that 
	\begin{equation}\label{33}
		\begin{aligned} 
			&\left| D_\e^j D_P^k D_{y}^\ell \left( g_{\e,P}^{\a \b} - \delta^{\a \b} \right) \right|
			\leq C_{k,\ell} \e^{2-j}, &
			&\left| D_\e^j D_P^k D_y^\ell \Ric_{\e,P}  \right| \leq C_{k,\ell} \e^{2-j},
			\\
			& \left\| D_\e^j D_P^k ( A_{\e,P} - A_0 ) \right\|_{C^\ell(S^2)}
				\leq C_{k,\ell} \e^{2-j},
			& &
			\left\| D_\e^j D_P^k ( \Delta_{g_{\e,P}} - \Delta_{g_0}  ) \varphi \right\|_{C^\ell(S^2)} 
			\leq C_{k,\ell} \e^{2-j} \| \varphi \|_{C^{\ell + 2}(S^2)}
		\end{aligned}
	\end{equation}
for all $P \in \overline{V}_0$, $k, \ell \in \N$, $j=0,1$, $|y|_{g_0} \leq \e^{-1} \rho_0$ and 
$\varphi \in C^{\ell + 2}(S^2)$ where $C_{k,\ell}$ depends only on $k,\ell$. 
Since $W_{g_0}'(S^2) = 0$ and $H_{\e,P}, H_0 \in C^\infty(S^2)$, we obtain 
	\[
		\left\| D_\e^j D_P^k W_{g_\e}'( \exp_P^{g_\e} (S^2)  ) \right\|_{C^\ell(S^2)}
		= \left\| D_\e^j D_P^k \left( W_{g_\e}'( \exp_P^{g_\e} (S^2)  ) - W_{g_0}'(S^2) \right) 
		\right\|_{C^\ell(S^2)} 
		\leq C_{k,\ell} \e^{2-j},
	\]
which completes the proof. 
	\end{proof}

Next, we find a correction for each $\Sigma \in \mathcal{T}_{\e,V_0}$ 
such  that it satisfies the area constraint condition and it
solves the equation up to some finite dimensional subspace in $L^2(S^2)$.

		To this aim, recall that $n_0(q) = q$ is an outer unit normal to $S^2$ with respect to $g_{\e,P}$. For $\varphi \in C^{4,\a}(S^2)$ and $P \in \overline{V}_0$, we set 
	\[
		S^2_{\e,P}[\varphi] := 
		\left\{ \left(1 + \varphi(q) \right) q   \ |\ q \in S^2  \right\} \subset \R^3, \quad 
		\Sigma_{\e,P}[\varphi] := 
		\exp_P^{g_\e} ( S^2_{\e,P}[\varphi] ).
	\]
Since we are interested in small $\varphi \in C^{4,\a}(S^2)$, 
we pull back all geometric quantities of $S^2_{\e,P}[\varphi]$ 
(or $\Sigma_{\e,P}[\varphi]$) on $S^2$. 
We denote by $\bar{g}_{\e,P,\varphi}$ the pull back of the tangential metric 
of $S^2_{\e,P}[\varphi]$ on $S^2$ and by $n_{\e,P,\varphi}$ the outer unit normal. 
We also write $L^2_{\e,P,\varphi}(S^2)$ and 
$\la \cdot , \cdot \ra_{\e,P,\varphi}$ the $L^2$-space on $S^2$ 
with volume induced by $\bar{g}_{\e,P,\varphi}$ and its inner product. 
We use the notations $\bar{g}_{0,\varphi}, n_{0,\varphi}, \ldots$ for the corresponding quantities in  the case $\e=0$, i.e. the Euclidean space case.

		Next, we define the space 
$\mathcal{K}_{\e,P,\varphi} \subset L^2_{\e,P,\varphi}(S^2)$ 
corresponding to $\mathcal{K}_0$ in \eqref{21}. 
Recalling $Z_0 = H_0/2$, we set 
\begin{equation}\label{eq:Kepphi}
\mathcal{K}_{\e,P,\varphi} := 
		{\rm span}\, \left\{ H_{\e,P,\varphi}, Z_{1}, Z_2, Z_{3} \right\} 
		\subset L^2_{\e,P,\varphi}(S^2), 
\end{equation}
where $H_{\e,P,\varphi}$ stands for the mean curvature of $\Sigma_{\e,P}[\varphi]$. 

\noindent
Next, we orthonormalize $H_{\e,P,\varphi}$, $Z_1$, $Z_2$ and $Z_3$ 
in $L^2_{\e,P,\varphi}(S^2)$ as follows. 
We first apply a Graham--Schmidt orthogonalization to 
$Z_1,Z_2,Z_3$ in $L^2_{\e,P,\varphi}$ to obtain 
$Y_{1,\e,P,\varphi}, Y_{2,\e,P,\varphi}, Y_{3,\e,P,\varphi}.$
Finally, we get $Y_{0,\e, P,\varphi}$ from $H_{\e,P,\varphi}$ and 
$Y_{i,\e,P,\varphi}$ ($1 \leq i \leq 3$). 
Then we define the $L^2_{\e,P,\varphi}(S^2)$-projection to 
$(\mathcal{K}_{\e,P,\varphi})^{\perp}$ by 
	\[
		\Pi^\varphi_{\e,P} \psi := \psi 
		- \sum_{i=0}^3 \la \psi , Y_{i,\e,P,\varphi} \ra_{\e,P,\varphi} 
		Y_{i,\e,P,\varphi} : 
		L^2_{\e,P,\varphi} (S^2) \to (\mathcal{K}_{\e,P,\varphi})^{\perp}. 
	\]
We denote with $Y_{i,0,\varphi}$ and $\Pi^\varphi_0$ the corresponding quantities in the Euclidean space.

	\begin{lmm}\label{302}
Let $k \in \N$ and $\a \in (0,1)$. Then 
there exist $r_{1}, \e_{1}, C_{k} > 0$ such that the maps 
	\[
		(0,\e_1) \times \overline{V}_0 \times \overline{B_{r_{1},C^{4+k,\a}}(0)} 
		\ni (\e, P,\varphi)  
		\mapsto Y_{i,\e,P,\varphi} \in C^{2+k,\a}(S^2) \quad 
		(0 \leq i \leq 3)
	\]
are smooth, where $B_{r_{1},C^{4+k,\a}}(0)$ stands for 
a metric ball in $C^{4+k,\a}(S^2)$. Moreover, the following estimates hold: for $j=0,1$, 
	\begin{equation}\label{34}
		\begin{aligned}
			&\left\| D_\e^j \left( Y_{i,\e,P,\varphi} - Y_{i,0,\varphi} \right) \right\|_{C^{2+k,\a}(S^2)} 
			+ \left\| D_\e^j D_{P}  (Y_{i,\e,P,\varphi} - Y_{i,0,\varphi}) 
			\right\|_{\mathcal{L}(\R^{3}, C^{2+k,\a}(S^2))} \\
			& \quad + \left\| D_\e^j D_{\varphi}  (Y_{i,\e,P,\varphi} - Y_{i,0,\varphi}) 
			\right\|_{\mathcal{L}(C^{4+k,\a}(S^2), C^{2+k,\a}(S^2))} 
			+ \left\| D_\e^j D_{P,\varphi}^2 (Y_{i,\e,P,\varphi} - Y_{i,0,\varphi}) 
			\right\|_{\mathcal{L}^2(C^{4+k,\a}(S^2), C^{2+k,\a}(S^2))} 
			\\ 
			&\quad \qquad 
			\leq C_k \e^{2-j} \left( 1 + \| \varphi \|_{C^{4+k,\a}(S^2)} \right).
		\end{aligned}
	\end{equation}
	\end{lmm}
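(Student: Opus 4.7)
The plan is to show that each geometric ingredient entering the Gram--Schmidt construction is smooth in $(\varepsilon, P, \varphi)$ with its Euclidean counterpart as the $\varepsilon = 0$ limit, and that differences together with their derivatives carry a prefactor $\varepsilon^{2-j}$ coming from the uniform expansion \eqref{31}--\eqref{32}. Since Gram--Schmidt is a rational algebraic procedure in finitely many $L^2$-inner products of fixed functions, both the smoothness and the estimates will then propagate mechanically to the $Y_{i,\varepsilon,P,\varphi}$.

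First, the map $(q,\varphi)\mapsto (1+\varphi(q))q$ is a smooth map from $S^2\times C^{4+k,\alpha}(S^2)$ to $\R^3$, so the pullback tangential metric $\bar{g}_{\varepsilon,P,\varphi}$, its area form, its inverse, the outer unit normal $n_{\varepsilon,P,\varphi}$, and the mean curvature $H_{\varepsilon,P,\varphi}$ are all smooth functions of $(\varepsilon,P,\varphi)\in (0,\varepsilon_1)\times \overline{V}_0\times \overline{B_{r_1,C^{4+k,\alpha}}(0)}$ with values in the appropriate Banach spaces. For $\|\varphi\|_{C^{4+k,\alpha}(S^2)}\leq r_1$ small, $\bar g_{\varepsilon,P,\varphi}$ is uniformly elliptic. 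Combining this smoothness with \eqref{31}--\eqref{32} yields, in $C^{2+k,\alpha}(S^2)$, uniform bounds of the form
\[
\bigl\| D_\varepsilon^j D_P^a D_\varphi^b \bigl( Q_{\varepsilon,P,\varphi} - Q_{0,\varphi}\bigr)\bigr\|
\le C_{k,a,b}\, \varepsilon^{2-j}\bigl(1 + \|\varphi\|_{C^{4+k,\alpha}(S^2)}\bigr)
\]
for $j=0,1$ and $a+b\le 2$, where $Q$ stands for any of $\bar g$, $\bar g^{-1}$, $n$, $H$, or the area density. The same type of bound then holds for the inner products $\langle Z_i,Z_j\rangle_{\varepsilon,P,\varphi}$ and $\langle H_{\varepsilon,P,\varphi},Z_i\rangle_{\varepsilon,P,\varphi}$ since $Z_1,Z_2,Z_3$ are fixed smooth functions on $S^2$.

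Next, run Gram--Schmidt on $Z_1,Z_2,Z_3$ inside $L^2_{\varepsilon,P,\varphi}(S^2)$. The relevant Gram matrix is a perturbation of the Euclidean Gram matrix $\int_{S^2} Z_iZ_j\,d\sigma_{g_0}=\frac{4\pi}{3}\delta_{ij}$ by a term of the above size, and therefore remains uniformly invertible; smoothness and the $\varepsilon^{2-j}$ estimates then pass through finitely many additions, multiplications, and square-root reciprocals to yield the conclusion for $Y_{1,\varepsilon,P,\varphi},Y_{2,\varepsilon,P,\varphi},Y_{3,\varepsilon,P,\varphi}$. For $Y_{0,\varepsilon,P,\varphi}$, one normalizes the projection of $H_{\varepsilon,P,\varphi}$ onto $\mathrm{span}\{Y_{1,\varepsilon,P,\varphi},Y_{2,\varepsilon,P,\varphi},Y_{3,\varepsilon,P,\varphi}\}^\perp$; since in the Euclidean limit $H_0/2\equiv 1$ is orthogonal to $Z_1,Z_2,Z_3$ and has positive norm, the normalization is nondegenerate for $\varepsilon$ and $\|\varphi\|_{C^{4+k,\alpha}(S^2)}$ small, and the same chain-rule argument gives both smoothness and the bound \eqref{34}.

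The only obstacle to watch is bookkeeping: each derivative in $P$ or $\varphi$ must still leave an $\varepsilon^2$ in place in the perturbative terms of the metric. This is guaranteed by \eqref{32}, which gives uniform-in-$k,\ell$ bounds on all mixed derivatives of $g_{\varepsilon,P,\alpha\beta}-\delta_{\alpha\beta}$, so that applying $D_P$ or $D_\varphi$ to any of the quantities $Q_{\varepsilon,P,\varphi}-Q_{0,\varphi}$ above does not destroy the $\varepsilon^{2-j}$ factor. With this observation in place, the Lipschitz dependence of Gram--Schmidt on its scalar arguments in a neighborhood of the Euclidean configuration transfers the bound to $Y_{i,\varepsilon,P,\varphi}-Y_{i,0,\varphi}$ and its $P$- and $\varphi$-derivatives, yielding \eqref{34}.
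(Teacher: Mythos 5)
Your proposal is correct and follows essentially the same route as the paper: establish smoothness of $X_\varphi$, $\bar g_{\e,P,\varphi}$, $n_{\e,P,\varphi}$, $H_{\e,P,\varphi}$ in $(\e,P,\varphi)$ and derive the $\e^{2-j}\left(1+\|\varphi\|_{C^{4+k,\a}}\right)$ bounds on their differences from the Euclidean quantities via \eqref{31}--\eqref{32}, then propagate through the Gram--Schmidt construction. The only difference is that you spell out the uniform invertibility of the Gram matrix and the nondegeneracy of the normalization of $H_{\e,P,\varphi}$, a step the paper dismisses with ``it is easily seen,'' so your write-up is if anything more complete.
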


	\begin{proof}
Let $X_\varphi(q)$ be the position vector for $S^2[\varphi]$: 
	\begin{equation}\label{035}
		X_\varphi(q) := \left( 1 + \varphi(q) \right) q \quad 
		{\rm for}\ q \in S^2.
	\end{equation}
Next, we fix small $r_{1} > 0$ and $\e_1>0$ so that 
$S^2[\varphi]$ is diffeomorphic to $S^2$ and 
$\exp_P^{g_\e}( S^2_{\e,P}[\varphi] )$ can be defined for 
all $\e \in [0,\e_1)$, $P \in \overline{V}_0$ and $\varphi \in C^{4+k,\a}(S^2,\R)$ with 
$\| \varphi \|_{C^{4+k,\a}(S^2)} \leq r_1$. 
Hereafter, we only deal with $\varphi \in \overline{B_{r_{1},C^{4+k,\a}}(0)}$ 
and $\e \in [0,\e_1)$. 
Then it is easily seen that the map 
	\[
		\varphi \mapsto X_{\varphi} :
		\overline{B_{r_{1},C^{4+k,\a}}(0)}
		\to C^{4+k,\a}(S^2,\R^3)
	\]
is smooth. 
Hence, we observe that the maps
	\[
		\begin{aligned}
			& (\e, P,\varphi) \mapsto g_{\e,P} (X_{\varphi}) &:& & &
			(0,\e_1) \times \overline{V}_0 \times \overline{B_{r_{1},C^{4+k,\a}}(0)}
			\to C^{4+k,\a}(S^2, (T\R^3)^\ast \otimes (T\R^3)^\ast  ), 
			\\
			& (\e, P,\varphi) \mapsto \bar{g}_{\e,P,\varphi}, \ \bar{g}_{0,\varphi} 
			&:& & & (0,\e_1) \times \overline{V}_0 \times \overline{B_{r_{1},C^{4+k,\a}}(0)} 
			\to C^{3+k,\a}(S^2, (TS^2)^\ast \otimes (TS^2)^\ast  ),
			\\
			& (\e, P,\varphi) \mapsto n_{\e,P,\varphi}, \ n_{0,\varphi}
			&:& & &
			(0,\e_1) \times \overline{V}_0 \times \overline{B_{r_{1},C^{4+k,\a}}(0)} 
			\to C^{3+k,\a} (S^2, \R^3),
			\\
			& (\e, P,\varphi) \mapsto H_{\e,P,\varphi}, \ H_{0,\varphi} 
			&:& & &
			(0,\e_1) \times \overline{V}_0 \times \overline{B_{r_{1},C^{4+k,\a}}(0)} 
			\to C^{2+k,\a}(S^2,\R)
		\end{aligned}
	\]
are smooth. Moreover, by \eqref{31}, we have
	\[
		\left\| D_\e^j D_{P}^\ell D_{\varphi}^m 
		\left( g_{\e,P} \left( X_{\varphi} \right) - g_0 \left( X_\varphi \right)  \right)
		\right\|_{ \mathcal{L}^m \left( C^{4+k,\a} (S^2) , C^{4+k,\a} (S^2) \right) } 
		\leq C_{k,\ell,m} \e^{2-j} \left( 1 +  \| \varphi \|_{C^{4+k,\a}} \right)
	\]
for $j=0,1$ and $\ell,m = 0,1,2$. Thus we obtain 
		\begin{align}
			&\left\| D_\e^j D_{P}^\ell D_\varphi^m 
			\left( \bar{g}_{\e,P,\varphi} - \bar{g}_{0,\varphi} \right) 
			\right\|_{ C^{3+k,\a}(S^2)  } 
			+ \left\| D_\e^j D_{P}^\ell D_\varphi^m  
			\left( n_{\e,P,\varphi} - n_{0,\varphi} \right) 
			\right\|_{ C^{3+k,\a}(S^2)  } \nonumber
			\\
			& \quad \qquad + \left\| D_\e^j D_{P}^\ell D_\varphi^m  
			\left( H_{\e,P,\varphi} - H_{0,\varphi} \right) 
			\right\|_{  C^{2+k,\a}(S^2) } 
			\leq C_k \e^{2-j} \left( 1 + \| \varphi \|_{C^{4+k,\a}(S^2)} \right) \label{eq:estgHn}
		\end{align}
for $j=0,1$ and $\ell,m = 0,1,2$, where we used a shorthand notation to denote the norms in the space of (multi)-linear operators. Now it is easily seen that \eqref{34} holds and we complete the proof. 
	\end{proof}

		 We next find a correction term for each element in $\mathcal{T}_{\e,V_0}$ 
so that the resulting surfaces solve the (area-constrained) Willmore equation up to an error in the
finite dimensional subspace $\mathcal{K}_{\e,P,\varphi}$ in \eqref{eq:Kepphi}.

	\begin{prp}\label{303}
Let $\a \in (0,1)$. 
There exist $C > 0$ and $\e_{2} > 0$ so that 
for every $\e \in (0,\e_{2})$ and $P \in \overline{V}_0$, 
there exists a unique $\varphi_{\e,P} \in C^{5,\a}(S^2)$ satisfying 
	\[
		\begin{aligned}
			&\emph{(i)} \quad 
			W_{g_\e}'( \Sigma_{\e,P}[\varphi_{\e,P}] ) 
			= \beta_0 H_{\e,P,\varphi_{\e,P}}
			+ \sum_{i=1}^3 \beta_i Z_{i}; 
			& &
			\emph{(ii)} \quad 
			| \Sigma_{\e,P}[\varphi_{\e,P}] |_{g_\e} = 4 \pi;
			\\
			&
			\emph{(iii)} \quad \left\la \varphi_{\e,P} , Y_{i,\e,P} 
			\right\ra_{\e,P,\varphi_{\e,P}} = 0 \ (1 \leq i \leq 3) ;
			& &
			\emph{(iv)} \quad 
			\| \varphi_{\e,P} \|_{C^{5,\a}(S^2)} \leq C \e^2 
		\end{aligned}
	\]
for some real numbers $\beta_0,\ldots, \beta_3$ where $|\Sigma|_{g_\e}$ 
denotes the area of $\Sigma$ in $g_\e$. 
Moreover, the map 
$(\e,P) \mapsto \varphi_{\e,P} : (0,\e_{2})\times \overline{V}_0 \to C^{5,\a}(S^2)$ is smooth and 
satisfies 
	\[
		\| D_P \varphi_{\e,P} \|_{\mathcal{L}(\R^3,C^{5,\a}(S^2))} 
		+ \| D_P^2 \varphi_{\e,P} \|_{ \mathcal{L}^2(\R^3,C^{5,\a}(S^2)  ) } 
		\leq C \e^2, \quad \| D_\e \varphi_{\e,P} \|_{C^{5,\a}(S^2))} \leq  C \e 
		\quad {\rm for\ all} \ \e \in (0,\e_2).
	\]
	\end{prp}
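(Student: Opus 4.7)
The plan is to apply the Implicit Function Theorem to a nonlinear map whose zeros encode conditions (i)--(iii), with the Lagrange multipliers $\b_0,\ldots,\b_3$ recovered a posteriori from the components of $W_{g_\e}'$ in the basis of $\mathcal{K}_{\e,P,\varphi}$. First I would rewrite condition (i) as $\Pi^\varphi_{\e,P}\left[ W_{g_\e}'(\Sigma_{\e,P}[\varphi]) \right] = 0$ in $(\mathcal{K}_{\e,P,\varphi})^{\perp}$. To work in fixed Banach spaces I would compose with the $L^2(S^2,g_0)$-projection $\pi_0$ onto $(\mathcal{K}_0)^{\perp}$ and define
\[
\mathcal{F}(\e, P, \varphi) := \Big( \pi_0 \Pi^\varphi_{\e,P}[W_{g_\e}'(\Sigma_{\e,P}[\varphi])] \, , \, |\Sigma_{\e,P}[\varphi]|_{g_\e} - 4\pi \, , \, \la \varphi, Y_{i,\e,P,\varphi} \ra_{\e,P,\varphi} \Big)_{1 \le i \le 3},
\]
viewed as a smooth map from a neighborhood of $(0, P_0, 0)$ in $[0, \e_1) \times \overline{V}_0 \times C^{5,\a}(S^2)$ into $\big((\mathcal{K}_0)^{\perp} \cap C^{1,\a}(S^2)\big) \times \R \times \R^3$. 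Smooth dependence on $(\e, P, \varphi)$ and a smooth extension up to $\e = 0$ both follow from Lemmas \ref{301} and \ref{302}.

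Next I would verify the hypotheses of the IFT at $(\e,\varphi) = (0,0)$. All three components of $\mathcal{F}$ vanish there: $S^2 \subset \R^3$ is Willmore (so $W_{g_0}'(S^2) = 0$), has area $4\pi$, and the orthogonality conditions at $\varphi = 0$ are trivial. To compute $D_\varphi \mathcal{F}(0, P, 0)$ I would split
\[
\psi = \psi^\perp + c_0 Z_0 + c_1 Z_1 + c_2 Z_2 + c_3 Z_3, \qquad \psi^\perp \in (\mathcal{K}_0)^{\perp},
\]
and treat the blocks separately. By the second variation formula of Proposition \ref{202} applied at the Euclidean round sphere (where $LH + \tfrac{1}{2}H^3 \equiv 0$, killing the boundary term), the first component linearizes to $\tilde{L}_0 \psi^\perp = \Delta(\Delta + 2)\psi^\perp$, which is an isomorphism $(\mathcal{K}_0)^{\perp} \cap C^{5,\a} \to (\mathcal{K}_0)^{\perp} \cap C^{1,\a}$ by standard elliptic theory for the self-adjoint fourth-order operator $\tilde{L}_0$ whose kernel is described in \eqref{21}. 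The area component linearizes via the first variation of area to $\int_{S^2} H_0 \psi \, d\sigma = 2\int_{S^2}\psi\, d\sigma = 8\pi c_0$. The orthogonality components linearize to nonzero multiples of $c_1, c_2, c_3$ because $Y_{i,0,0} \in \mathrm{span}(Z_i)$ after Gram--Schmidt. Hence $D_\varphi \mathcal{F}(0, P, 0)$ is block-diagonal and an isomorphism.

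Finally, the IFT produces a locally unique $\varphi_{\e,P}$ smooth in $(\e, P)$ with $\varphi_{\e,P} \to 0$ as $\e \to 0$. The quantitative bound $\|\varphi_{\e,P}\|_{C^{5,\a}} \le C\e^2$ follows from $\|\mathcal{F}(\e, P, 0)\| = O(\e^2)$: Lemma \ref{301} bounds the Willmore piece by $O(\e^2)$, the metric expansion \eqref{31}--\eqref{32} yields $\big||\Sigma_{\e,P}[0]|_{g_\e} - 4\pi\big| \le C\e^2$, and the orthogonality terms vanish identically at $\varphi = 0$. The estimates on $D_P\varphi_{\e,P}$, $D_P^2\varphi_{\e,P}$ and $D_\e\varphi_{\e,P}$ would come from differentiating the identity $\mathcal{F}(\e,P,\varphi_{\e,P}) = 0$ in $(\e,P)$ and invoking the parametric bounds of Lemmas \ref{301} and \ref{302}. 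The multipliers $\b_0,\ldots,\b_3$ are then read off by expressing $W_{g_\e}'(\Sigma_{\e,P}[\varphi_{\e,P}])\in \mathcal{K}_{\e,P,\varphi_{\e,P}}$ in the basis $\{H_{\e,P,\varphi_{\e,P}}, Z_1, Z_2, Z_3\}$ via the Gram matrix against $\{Y_{i,\e,P,\varphi_{\e,P}}\}$. I expect the main obstacle to be the careful bookkeeping needed to realize the $\varphi$-dependent projection $\Pi^\varphi_{\e,P}$ as a smooth map into the fixed target space, together with aligning regularity thresholds: since $\tilde{L}_0$ is of order four, choosing the pair $C^{5,\a}\to C^{1,\a}$ is necessary both to absorb the two-derivative loss inherent in fourth-order elliptic estimates and to recover the $C^{5,\a}$-bound in (iv).
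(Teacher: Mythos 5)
Your proposal is correct and follows essentially the same Lyapunov--Schmidt strategy as the paper, with the same model operator $\tilde L_0 = \Delta(\Delta+2)$ and the same parametric IFT argument for the estimates; the only difference is in packaging: you encode conditions (i)--(iii) through a map into the product space $\bigl((\mathcal{K}_0)^\perp\cap C^{1,\a}\bigr)\times\R\times\R^3$, which makes the block-diagonal structure of the linearization explicit, whereas the paper defines a single scalar-valued map
\[
G(\e,P,\varphi)=\Pi^\varphi_{\e,P}\bigl(W_{g_\e}'(\Sigma_{\e,P}[\varphi])\bigr)+\bigl(|\Sigma_{\e,P}[\varphi]|_{g_\e}-4\pi\bigr)H_{\e,P,\varphi}+\sum_{i=1}^3\langle Y_{i,\e,P,\varphi},\varphi\rangle_{\e,P,\varphi}\,Y_{i,\e,P,\varphi}
\]
into $C^{1,\a}(S^2)$, and uses the fact that the three summands live in mutually $L^2_{\e,P,\varphi}$-orthogonal subspaces to conclude that $G=0$ forces each one to vanish. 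The two formulations are equivalent via the obvious isomorphism, and your computation of $D_\varphi\mathcal F(0,P,0)$ matches the paper's $D_\varphi G_0(0)[\psi]=\tilde L_0\psi+\langle H_0,\psi\rangle H_0+\sum_i\langle Y_{i,0},\psi\rangle Y_{i,0}$.
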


	\begin{proof}
Define a map 
$G (\e, P,\varphi) : (0,\e_1) \times \overline{V}_0 \times \overline{B_{r_{1},C^{5,\a}}(0)} 
\to C^{1,\a}(S^2,\R) $ by 
	\[
		G (\e, P,\varphi) := 
		\Pi^\varphi_{\e,P} \left( W_{g_\e}' \left( \Sigma_{\e,P}[\varphi] \right) \right) 
		+ \left( \left| \Sigma_{\e,P}[\varphi] \right|_{g_\e} - 4 \pi  \right) H_{\e,P,\varphi} 
		+ \sum_{i=1}^3 \left\la Y_{i,\e,P,\varphi} , \varphi \right\ra_{\e,P,\varphi} 
		Y_{i,\e,P,\varphi}.
	\]
By definition of $\mathcal{K}_{\e,P,\varphi}$, $Y_{j,\e,P,\varphi}$ and 
$\Pi^\varphi_{\e,P}$, to obtain the properties (i)--(iii)
it is enough to find $\varphi_{\e,P}$ satisfying 
$G (\e, P,\varphi_{\e,P}) = 0$.

	For this purpose, we show the existence of $\e_2>0$ and $r_2>0$ so that 
$D_\varphi G(\e, P,\varphi) : C^{5,\a}(S^2) \to C^{1,\a}(S^2)$ is invertible for all 
$(\e, P,\varphi) \in (0,\e_2) \times \overline{V}_0 \times \overline{B_{r_{2},C^{5,\a}}(0)}$. 
We first remark that $G$ is smooth in $\e,P$ and $\varphi$. 
Moreover, by Lemma \ref{302} together with \eqref{31}--\eqref{33}, 
we have the following estimates 
	\begin{equation}\label{35}
		\sum_{k,\ell =0}^2 \left\| D_\e^j D_{P}^k D_\varphi^\ell \left( G (\e, P,\varphi) - G_0(\varphi) \right) 
		\right\|_{C^{1,\a}(S^2)} \leq C \e^{2-j} \left( 1 + \| \varphi \|_{C^{5,\a}(S^2)} \right)
	\end{equation}
for each $j=0,1$ and $(\e, P,\varphi) \in (0,\e_1) \times \overline{V}_0 \times \overline{B_{r_{1},C^{5,\a}}(0)}$. 
Here $G_0(\varphi)$ is the corresponding map 
in the Euclidean space. 
Thanks to \eqref{35}, it suffices to show that 
$D_\varphi G_0(0)$ is invertible.

		For this, we recall from Proposition \ref{202} and 
the comments below it that 
	\[
		D_\varphi W_{g_0}'(S^2[\varphi]) \big|_{\varphi =0} [\psi] 
		= \frac{d}{dt} W_{g_0}'(S^2[t \psi]) \big|_{t=0} 
		= \tilde{L}_0 \psi 
		= \Delta (\Delta + 2) \psi
	\]
for $\psi \in C^{5,\a}(S^2)$. 
Hence, since $|S^2|_{g_0} = 4\pi$ and $W_{g_0}'(S^2) = 0$, we obtain 
	\[
		D_\varphi G_0(0) [\psi] 
		= \Pi^0_0  \tilde{L}_0 \psi  + 
		\la H_{0} , \psi \ra_{L^2(S^2)} H_0 
		+ \sum_{i=1}^3 \la Y_{i,0} , \psi \ra_{L^2(S^2)} Y_{i,0}. 
	\]
Noting that 
	\[
		\la \varphi , \tilde{L}_0 \psi \ra_{L^2(S^2)} = 
		\la \tilde{L}_0 \varphi , \psi \ra_{L^2(S^2)}
	\]
holds for any $\varphi,\psi \in C^4(S^2)$ and that 
	\[
		{\rm Ker}\, \tilde{L}_0 = \mathcal{K}_0 
		= {\rm span}\, \{ H_0, Y_{1,0}, Y_{2,0}, Y_{3,0} \}, 
	\]
we have 
$\Pi^0_0 \tilde{L}_0 \psi = \tilde{L}_0 \psi$ and 
	\begin{equation}\label{36}
		D_\varphi G_0(0) [\psi] 
		= \tilde{L}_0 \psi  + 
		\la H_{0} , \psi \ra_{L^2(S^2)} H_0 
		+ \sum_{i=1}^3 \la Y_{i,0} , \psi \ra_{L^2(S^2)} Y_{i,0}. 
	\end{equation}
Moreover, by  Fredholm's alternative and 
 elliptic regularity theory, we notice that 
	\[
		\tilde{L}_0 \left( C^{5,\a} ( S^2 )  \right) 
		= C^{1,\a} (S^2) \cap \mathcal{K}_0^\perp.
	\]
Thus it follows from \eqref{36} and Schauder's estimates that $D_\varphi G_0(0)$ 
is invertible and by \eqref{35}, we may find 
$\e_2>0$ and $r_2>0$ satisfying the desired property.

		Now, for $\e \in [0,\e_2)$, the Inverse Mapping Theorem 
ensures the existence of neighborhoods $U_{1,\e,P} \subset C^{5,\a}(S^2)$ of $0$ 
and $U_{2,\e,P} \subset C^{1,\a}(S^2)$ of $G(\e,P,0)$ such that 
$G(\e,P, \cdot) : U_{1,\e,P} \to U_{2,\e,P}$ is  a diffeomorphism. 
Furthermore, by \eqref{35} and the proof of the Inverse Mapping Theorem 
(see Lang \cite[Theorem 3.1 in Chapter XVIII]{LANG}), 
shrinking $r_2>0$ if necessary, we may assume that 
	\[
		\overline{B_{r_2,C^{5,\a}(S^2)}(0)} \subset U_{1,\e,P}, \quad  \qquad
		\overline{B_{2r_2,C^{1,\a}(S^2)}(G(\e,P,0))} \subset U_{2,\e,P}
	\]
for all $(\e,P) \in (0,\e_2) \times \overline{V}_0$. 
Noting that $\| G (\e,P,0) \|_{C^{1,\a}(0)} \leq C \e^2$ holds 
due to Lemma \ref{301} and \eqref{31}--\eqref{32}, 
shrinking $\e_2>0$ enough, we have 
	\[
		\overline{B_{r_2,C^{1,\a}(S^2)}(0)} \subset 
		\overline{B_{2r_2,C^{1,\a}(S^2)}(G(\e,P,0))}. 
	\]
for each $(\e,P) \in (0,\e_2) \times \overline{V}_0$. 
In particular, $ 0 \in U_{2,\e,P}$ holds and setting 
$\varphi_{\e,P} := \left( G(\e,P,\cdot) \right)^{-1} (0) $, we see that 
the properties (i)--(iii) hold. 
\\From \eqref{35} and $G(\e,P_\e,\varphi_{\e,P}) = 0$, we get $\|G_{0}(\varphi_{\e,P})\|_{C^{1,\a}(S^2)}\leq C \e^{2}$. Thus, by the invertibility of $G_{0}$, also (iv) holds.

		The smoothness of $\varphi_{\e,P}$ in $(\e,P)$ follows from that of 
$G$ and the Implicit Function Theorem. 
The estimates on $D_P^k \varphi_{\e,P}$, $k=1,2$, (resp. on $D_\e \varphi_{\e,P}$) follow from 
differentiating the equation 
$G(\e,P, \varphi_{\e,P}) = 0$ in $P$ (resp. in $\e$) and using that 
$\| (D_P^k G) (\e, P,\varphi_{\e,P}) \|_{C^{1,\a}(S^2)} \leq C\e^2$ for $k=1,2$ 
due to Lemmas \ref{301} and \ref{302}, \eqref{31}--\eqref{33}, \eqref{35} and the fact that $D_\varphi G_0(0)$ is invertible 
(resp. using that $\| (D_\e G) (\e,P,\varphi_{\e,P}) \|_{C^{1,\a}(S^2)} \leq C\e$). 
Hence, the proof of Proposition \ref{303} is complete. 
	\end{proof}

	Recalling the function $\varphi_{\e,P}$ given by Proposition \ref{303}, we set 
	\[
		\Phi_\e(P) := W_{g_\e} ( \Sigma_{\e,P} [\varphi_{\e,P}]  )  
		\in C^2(\overline{V}_0,\R).
	\]

	\begin{prp}\label{304}
There exists  $\e_3>0$ such that if $\e \in (0,\e_3)$ and 
$P_\e \in V_0$ is a critical point of $\Phi_\e$, 
then $\Sigma_{\e,P_\e}[\varphi_{\e,P_\e}]$ satisfies 
the area-constrained Willmore equation, namely, 
$\beta_1=\beta_2=\beta_3 = 0$ hold in 
Proposition \emph{\ref{303} (i)}. 
	\end{prp}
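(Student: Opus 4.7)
The plan is to differentiate $\Phi_\e$ in the base point $P$, use condition (i) of Proposition \ref{303} to write $W_{g_\e}'(\Sigma_{\e,P}[\varphi_{\e,P}])$ in terms of the Lagrange multipliers $\beta_0,\ldots,\beta_3$, and then extract a $3\times 3$ linear system in $\beta_1,\beta_2,\beta_3$ whose coefficient matrix is invertible for small $\e$.

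First I would write, for $V\in\R^3$,
\[
  D_P \Phi_\e(P)[V] = \int_{\Sigma_{\e,P}[\varphi_{\e,P}]} W_{g_\e}'(\Sigma_{\e,P}[\varphi_{\e,P}])\, \psi_{\e,P,V}\, d\sigma_{\e,P,\varphi_{\e,P}},
\]
where $\psi_{\e,P,V}$ is the normal component (with respect to $g_\e$) of the infinitesimal variation of the immersion induced by moving $P$ in direction $V$ while carrying along the correction $\varphi_{\e,P}$. Substituting (i) of Proposition \ref{303},
\[
  D_P \Phi_\e(P)[V] = \beta_0 \int H_{\e,P,\varphi_{\e,P}}\, \psi_{\e,P,V}\, d\sigma_{\e,P,\varphi_{\e,P}} + \sum_{i=1}^3 \beta_i \int Z_i\, \psi_{\e,P,V}\, d\sigma_{\e,P,\varphi_{\e,P}}.
\]
By the first variation of area, the $\beta_0$-term equals $\beta_0\, D_P |\Sigma_{\e,P}[\varphi_{\e,P}]|_{g_\e}[V]$, which vanishes identically in $P$ thanks to the area constraint (ii) of Proposition \ref{303}. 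Hence at a critical point $P_\e$ of $\Phi_\e$ we obtain the linear system
\[
  \sum_{i=1}^3 \beta_i\, M_{ij}(\e,P_\e) = 0 \qquad (j=1,2,3),
\]
where $M_{ij}(\e,P) := \int_{S^2} Z_i\, \psi_{\e,P,e_j}\, d\sigma_{\e,P,\varphi_{\e,P}}$ and $\{e_1,e_2,e_3\}$ is the chosen orthonormal frame.

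The remaining step is to prove that $M(\e,P)$ is invertible for $\e$ small, uniformly in $P\in\overline{V_0}$. To this end I would expand $\psi_{\e,P,V}$ as $\e\to 0$. Recall that in the rescaled metric $g_\e$, the immersion is $q\mapsto \exp_P^{g_\e}\bigl((1+\varphi_{\e,P}(q))q\bigr)$. Differentiating with respect to $P$ along $V$, and using $\exp_P^{g_\e}(z)=\exp_P^g(\e z)$ together with \eqref{31}--\eqref{32} (so that $g_{\e,P}$ is a $\e^2$-perturbation of the Euclidean metric), the tangent vector at $P$ becomes $V$ plus an $O(\e^2)$ correction; projecting onto the outward normal $n_{\e,P,\varphi_{\e,P}}(q)$, which equals $q$ plus $O(\|\varphi_{\e,P}\|_{C^1}+\e^2) = O(\e^2)$ contributions by Lemma \ref{302} and Proposition \ref{303}(iv), yields
\[
  \psi_{\e,P,V}(q) = g_0(V,q) + O(\e^2) = \sum_{i=1}^3 V^i\, Z_i(q) + O(\e^2) \quad \text{in } C^0(S^2),
\]
where the contribution of $D_P\varphi_{\e,P}$ is $O(\e^2)$ by Proposition \ref{303}. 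Combined with $d\sigma_{\e,P,\varphi_{\e,P}} = d\sigma_{S^2} + O(\e^2)$, this gives
\[
  M_{ij}(\e,P) = \int_{S^2} Z_i Z_j\, d\sigma_{S^2} + O(\e^2) = \frac{4\pi}{3}\,\delta_{ij} + O(\e^2),
\]
since $Z_1,Z_2,Z_3$ are the restrictions of the coordinate functions to $S^2$ and hence $L^2$-orthogonal with $\|Z_i\|_{L^2(S^2)}^2=4\pi/3$. For $\e$ sufficiently small (uniformly in $P\in\overline{V_0}$), $M(\e,P_\e)$ is thus invertible, which forces $\beta_1=\beta_2=\beta_3=0$ and completes the argument.

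The main delicate point is the expansion of $\psi_{\e,P,V}$: one must verify that differentiating the twisted immersion in $P$, including the implicit dependence via $\varphi_{\e,P}$, indeed produces $g_0(V,q)$ plus an error that is small enough to be absorbed. This rests on the smoothness of $\varphi_{\e,P}$ in $P$ and the derivative estimate $\|D_P\varphi_{\e,P}\|_{C^{5,\a}}\le C\e^2$ from Proposition \ref{303}, together with the uniform expansions \eqref{31}--\eqref{32} for $g_{\e,P}$.
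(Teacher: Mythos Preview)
Your proposal is correct and follows essentially the same approach as the paper: differentiate $\Phi_\e$ in $P$, substitute Proposition~\ref{303}(i), kill the $\beta_0$-term via the area constraint, and show the resulting $3\times 3$ matrix is $\tfrac{4\pi}{3}\delta_{ij}+O(\e^2)$ by proving $\psi_{\e,P,e_j}=Z_j+O(\e^2)$. The paper carries out the last expansion more explicitly by working in $g_\e$-normal coordinates centered at $P_\e$ and using the geodesic equation with $\|\Gamma^\a_{\e,\b\gamma}\|\le C\e^2$ to control the derivative of the exponential map in the base point, but the content is the same.
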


	\begin{proof}
We first remark that the criticality of $\Phi_\e(P)$ is independent of the
choices of charts; we will use  normal coordinates with respect to the metric  $g_\e$  centered  at $P$. 
We  will use the same notation as in the proof of Lemma \ref{302}: in particular recall \eqref{035}.

	Assume that $P_\e \in V_0$ is a critical point of $\Phi_\e$ and 
let $(U,\Psi)$ be a normal coordinate system centered  at $P_\e$. For $P \in U$ 
with $z = \Psi(P)$, a position vector for $\Sigma_{\e,P}[\varphi]$ in 
$(U,\Psi)$ has the form 
	\[
		\tilde{X}_{\e,P,\varphi}(q) 
		= \mathcal{X}_\e(1;z, T_\e(z) ( X_{\e,P_\e,\varphi}(q)  )) \quad 
		{\rm for} \ q \in S^2,
	\]
where $T_\e(z) : \R^3 \to \R^3$ is a linear transformation with $T_\e(0) = {\rm Id}$, 
$\mathcal{X}_\e$ a solution of 
	\[
		\frac{d^2 \mathcal{X}_\e^\a}{dt^2} 
		+ \Gamma^\a_{\e,\b \gamma} (\mathcal{X}_\e) 
		\frac{d \mathcal{X}_\e^\b}{dt} \frac{d \mathcal{X}_\e^\gamma}{dt} = 0, \quad 
		\left( \mathcal{X}_\e(0;z,v) , \frac{d \mathcal{X}_\e}{dt} (0;z,v) \right) 
		= (z,v) \in \R^6
	\]
and $\Gamma^\a_{\e,\b \gamma}$ stand for the Christoffel symbols 
of $(M,g_\e)$ in the coordinate system $(U,\Psi)$. 
Using \eqref{31} and \eqref{32}, we may observe that 
for any $k,\ell \geq 0$, 
	\begin{equation}\label{38}
		\begin{aligned}
			&\left\| D_z^{k+1} T_\e (z) \right\|_{L^\infty} \leq C_{k} \e^2, 
			& & \left\| \Gamma^\a_{\e,\b \gamma} \right\|_{C^{k}} 
			\leq C_{k} \e^2,
			\\
			& \mathcal{X}_\e(1;z,v) = z + v + R_{\e}(z,v), 
			& &\left\| D_z^{k} D_v^\ell R_{\e} (z,v) \right\|_{L^\infty} 
			\leq C_{k,\ell} \e^2. 
		\end{aligned}
	\end{equation}
Therefore, we have 
	\[
		\tilde{X}_{\e,P,\varphi}(q) 
		= z + X_{\e,P_\e,\varphi}(q) + \tilde{R}_\e (z, X_{\e,P_\e,\varphi}(q) )
	\]
where $\tilde{R}$ satisfies the same estimate as $R_\e$ in \eqref{38}.

	We now  differentiate $\tilde{X}_{\e,P,\varphi_{\e,P}}(q)$ in $z^i$ 
($1 \leq i \leq 3$). 
By Proposition \ref{303} and \eqref{38}, we obtain 
	\[
		\frac{\partial \tilde{X}_{\e,P,\varphi_{\e,P}}}{\partial z^i} 
		= \mathbf{e}_i + O_{C^{5,\a}}(\e^2)
	\]
where $\| O_{C^{5,\a}}(\e^2) \|_{C^{5,\a}(S^2)} \leq C\e^2$. 
Moreover, one has 
	\[
		n_{\e,P,\varphi_{\e,P}} - n_0 = O_{C^{4,\a}}(\e^2).
	\]
Recalling the definition of $Z_i(q)$ in Section \ref{2000} and 
setting 
	\begin{equation}\label{eq:defPsiieP}
		\psi_{i,\e,P} (q) := g_\e ( \tilde{X}_{\e,P, \varphi_{\e,P}} (q) ) 
		\left( \frac{\partial \tilde{X}_{\e,P,\varphi_{\e,P}} (q) }{\partial z^i} , 
		n_{\e,P,\varphi_{\e,P}} (q) \right), 
	\end{equation}
we find that $\psi_{i,\e,P} - Z_i = O_{C^{4,\a}}(\e^2)$. 
Finally, differentiating $|\Sigma_{\e,P} [\varphi_{\e,P}] |_{g_\e} = 4 \pi$ in 
$z^i$, we get 
	\[
		\left\la H_{\e,P,\varphi_{\e,P}} , \psi_{i,\e,P} \right\ra_{\e,P,\varphi_{\e,P}} 
		= 0. 
	\]

	Now, by $\partial_{z^i} \Phi_\e(P_\e) = 0$ and 
$\la Z_i , Z_j \ra_{L^2(S^2)} = \delta_{ij} \| Z_i \|_{L^2(S^2)}^2$ due to \eqref{22}, we have 
	\[
		\begin{aligned}
			0 
			= \partial_{z^i} \Phi_\e(P_\e) 
			&= \left\la W_{g_\e}'( \Sigma_{\e,P_\e}[\varphi_{\e,P_\e}] ) , \psi_{i,\e,P_\e} 
			\right\ra_{\e,P_\e,\varphi_{\e,P_\e}} 
			\\
			&= \sum_{j=1}^{3} \beta_j \left\la Z_j , \psi_{i,\e,P_\e} 
			\right\ra_{\e,P_\e,\varphi_{\e,P_\e}} 
			= \sum_{j=1}^3 \beta_j
			\left( \delta_{ij} \| Z_i \|_{L^2(S^2)}^2 + O(\e^2)  \right).
		\end{aligned}
	\]
Thus there exists  $\e_3>0$ such that if $\e \in (0,\e_3)$, then we have 
$\beta_1 = \beta_2 = \beta_3 = 0$ and Proposition \ref{304} holds. 
	\end{proof}
	
	\begin{rem}\label{rem:PhieGlobal}
	If $(M,g)$ is a 3-dimensional compact Riemannian manifold without boundary, then  we can define globally the reduced functional $\Phi_{\e}:M\to \R$ as
	 \[
		\Phi_\e(P) := W_{g_\e} ( \Sigma_{\e,P} [\varphi_{\e,P}]  ) \in C^2(M,\R) \quad  \forall \e\in (0,\bar{\e}],\text{ for some $\bar{\e}=\bar{\e}(M)>0$}.
	\]
	Moreover $P_{\e}$ is a critical point of $\Phi_{\e}$ if and only if the perturbed geodesic sphere  $\Sigma_{\e,P} [\varphi_{\e,P}]$ is an area-constrained Willmore surface of area $4\pi \e^{2}$. 
	\\Indeed, for every $P\in M$ we can find a neighborhood $U_{P}\ni P$ and $\e_{P}>0$ such that  $\Phi_\e:U_{P}\to \R$ is well defined as above for every $\e\in (0,\e_{P}]$.  Note that if two neighborhoods overlap, then the corresponding definitions of
	$\Phi_{\e}$ agree thanks to the uniqueness of $\varphi_{\e,P}$ in Proposition \ref{303}. By the compactness of $M$ we can then find $P_{1},\dots, P_{N}$ so that $M=\cup_{i=1}^{N} U_{P_{i}}$. Hence, setting $\bar{\e}(M)=\min\{\e_{P_{1}}, \ldots, \e_{P_{N}}\}>0$
	 and patching  the local definitions of $\Phi_{\e}$,  the claim is proved.
	\end{rem}

% % % % % % % % % % % % % % % % % % % % % % % % % % % % % % % % % %
% % % % % % % % % % % % % % % % % % % % % % % % % % % % % % % % % %
% % % % % % % % % % % % % % % % % % % % % % % % % % % % % % % % % %

\section{Concentration of area-constrained Willmore spheres}

% % % % % % % % % % % % % % % % % % % % % % % % % % % % % % % % % %
% % % % % % % % % % % % % % % % % % % % % % % % % % % % % % % % % %
% % % % % % % % % % % % % % % % % % % % % % % % % % % % % % % % % %

The goal of this section is to prove the next result.

\begin{thm}\label{101}
Let $(M,g)$ be a 3-dimensional Riemannian manifold and $\Sc$ denote the scalar 
curvature of $M$. Assume that $P_0 \in M$ is a non-degenerate critical point of 
$\Sc$. Then there exists  $\e_0>0$ such that for each $\e \in (0,\e_0)$, 
there exists an area-constrained Willmore  sphere $\Sigma_\e \subset M$ 
with $|\Sigma_\e|_g = 4 \pi \e^2$ such that 
$\Sigma_\e$ concentrates at $P_0$.  More precisely $\Sigma_{\e}$ is the normal graph of a function $\varphi_{\e}\in C^{5,\alpha}(S^{2})$ over a geodesic sphere centered in $P_{\e}$ satisfying:
\begin{equation}\label{eq:propSe}
\Sigma_{\e}:=\Sigma_{\e,P_{\e}}[\varphi_{\e}] := \exp_{P_{\e}}^{g_\e} ( S^2_{\e,P_{\e}}[\varphi_{\e}] ), \quad \|\varphi_{\e}\|_{C^{5,\alpha}}\leq C\e^{2}, 
\end{equation}
for some constant $C=C(P_{0})>0$ independent  of $\e$.  
Moreover, if  the index of $P_{0}$ as a  critical point of $\Sc$ is equal to $3-k$ 
%\footnote{The index of a non-degenerate critical point $P$ of a function $h:M\to \R$ is the number of negative eigenvalues of the Hessian of $h$ at $P$}
, then each surface $\Sigma_\varepsilon$ is an area-constrained critical point of $W$ of  index $k$.
\end{thm}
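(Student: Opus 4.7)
The strategy is to apply Proposition~\ref{304}: locate a critical point $P_\e\in V_0$ of the reduced functional $\Phi_\e(P) := W_{g_\e}(\Sigma_{\e,P}[\varphi_{\e,P}])$ near $P_0$, set $\Sigma_\e := \Sigma_{\e,P_\e}[\varphi_{\e,P_\e}]$, and transfer the Morse index from $P_\e$ to $\Sigma_\e$ through the Lyapunov--Schmidt scheme. The key ingredient is a $C^2$-asymptotic expansion of $\Phi_\e$. Using scale invariance $W_{g_\e} = W_g$ together with the metric expansion \eqref{31}--\eqref{32}, a routine computation gives, for the unperturbed geodesic sphere $\exp_P^{g_\e}(S^2)$ of $g$-radius $\e$ at $P$, the expansion
\[
W_{g_\e}(\exp_P^{g_\e}(S^2)) = 16\pi - \tfrac{8\pi}{3}\,\e^2\, \Sc(P) + O(\e^4).
\]
Since $\|\varphi_{\e,P}\|_{C^{5,\a}(S^2)} = O(\e^2)$ by Proposition~\ref{303}(iv) and $\delta W_{g_0}(S^2)=0$, replacing $\exp_P^{g_\e}(S^2)$ by $\Sigma_{\e,P}[\varphi_{\e,P}]$ perturbs the energy only at order $\e^4$. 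Combined with the derivative bounds on $\varphi_{\e,P}$ in $P$ from Proposition~\ref{303}, this yields
\[
\Phi_\e(P) = 16\pi - \tfrac{8\pi}{3}\,\e^2\, \Sc(P) + \e^4 R_\e(P), \qquad \|R_\e\|_{C^2(\overline V_0)} \le C,
\]
uniformly in $\e\in(0,\e_2)$.

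\medskip

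Next I would observe that $\Psi_\e(P) := (\Phi_\e(P) - 16\pi)/\e^2 = -(8\pi/3)\Sc(P) + \e^2 R_\e(P)$ is a $C^2$-perturbation of $-(8\pi/3)\Sc$, for which $P_0$ is a non-degenerate critical point. The implicit function theorem applied to $\nabla\Psi_\e = 0$ then produces, for every sufficiently small $\e>0$, a unique critical point $P_\e\in V_0$ of $\Phi_\e$ with $P_\e\to P_0$ and
\[
\Hess \Phi_\e(P_\e) = -\tfrac{8\pi}{3}\,\e^2\, \Hess \Sc(P_0) + o(\e^2).
\]
By Sylvester's law of inertia, the Morse index of $P_\e$ as a critical point of $\Phi_\e$ equals the number of positive eigenvalues of $\Hess \Sc(P_0)$, namely $3-(3-k)=k$. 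Proposition~\ref{304} then gives that $\Sigma_\e = \Sigma_{\e,P_\e}[\varphi_{\e,P_\e}]$ is an area-constrained Willmore sphere with $|\Sigma_\e|_g = \e^2 |\Sigma_\e|_{g_\e} = 4\pi\e^2$; concentration at $P_0$ together with the graph bound \eqref{eq:propSe} follow from Proposition~\ref{303}(iv) and $P_\e\to P_0$.

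\medskip

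The last and most delicate step is to transfer the Morse index from $P_\e$ to $\Sigma_\e$. Let $\lambda_\e\in\R$ be the Lagrange multiplier of $\Sigma_\e$ and put $Q_\e := \delta^2\bigl(W_{g_\e}-\lambda_\e\,\mathrm{Area}\bigr)(\Sigma_\e)$; the index of $\Sigma_\e$ is the dimension of the maximal negative subspace of $Q_\e$ on $\mathcal{H}_\e := \{\psi : \int_{\Sigma_\e}\psi H_\e\,d\sigma = 0\}$. I would split $\mathcal{H}_\e = V_a^\e \oplus V_b^\e$, where $V_a^\e := \mathrm{span}\{\psi_{1,\e,P_\e},\psi_{2,\e,P_\e},\psi_{3,\e,P_\e}\}$ (cf.\ \eqref{eq:defPsiieP}) is the tangent space at $P_\e$ to the $3$-dimensional solution manifold $\{\Sigma_{\e,P}[\varphi_{\e,P}]\}_{P\in V_0}$, which is area-preserving by construction, and $V_b^\e$ is the $L^2_{\e,P_\e,\varphi_{\e,P_\e}}$-orthogonal complement of $V_a^\e$ inside $\mathcal{H}_\e$. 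Restricting $W_{g_\e}$ to the solution manifold recovers $\Phi_\e$, so $Q_\e|_{V_a^\e} = \Hess \Phi_\e(P_\e)$ has Morse index $k$. On $V_b^\e$, $Q_\e$ is a continuous perturbation in $\e$ of $2\tilde L_0 = 2\Delta(\Delta+2)$, which is strictly positive on $\mathcal{K}_0^\perp$ by \eqref{21}, so $Q_\e|_{V_b^\e}$ remains positive definite for small $\e$. The cross block $Q_\e(V_a^\e,V_b^\e)$ vanishes at $\e=0$ because $V_a^0\subset \mathcal{K}_0 = \ker \tilde L_0$, and is therefore $O(\e^2)$.

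\medskip

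\emph{Main obstacle.} The bookkeeping in the last step is delicate: the diagonal block on $V_a^\e$ and the cross block are both of size $\e^2$, so they are a priori of the same order. However, in the Schur complement formula for $Q_\e$ the cross block enters quadratically, producing only an $O(\e^4)$ correction to $Q_\e|_{V_a^\e}$, which cannot change the signs of the eigenvalues of $\Hess \Phi_\e(P_\e)$ (of order $\e^2$) for $\e$ small. Consequently the signature of $Q_\e$ equals the sum of the signatures of its diagonal blocks, giving the total Morse index $k+0=k$ and completing the proof.
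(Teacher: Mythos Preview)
Your proposal is correct and follows essentially the same route as the paper: establish a $C^2$-expansion $\Phi_\e = 16\pi - \tfrac{8\pi}{3}\e^2\Sc + \text{(higher order)}$ by comparing $\Phi_\e$ first with $W_{g_\e}(\exp_P^{g_\e}(S^2))$ and then with $16\pi - \tfrac{8\pi}{3}\e^2\Sc$, apply the implicit function theorem (the paper packages this as Lemma~\ref{401}) to locate $P_\e$, invoke Proposition~\ref{304}, and read off the index by splitting normal variations into the three ``translational'' directions $\psi_{i,\e,P_\e}$ and their orthogonal complement. Two minor remarks: the paper's proof of Theorem~\ref{101} only claims an $O(\e^3)$ remainder (the sharper $O(\e^4)$ you state is in fact true and is proved later as Lemma~\ref{lmm:geomExp}), and your Schur--complement computation for the index is more explicit than the paper's one-line justification---your observation that the cross block contributes only at order $\e^4$ while $\Hess\Phi_\e(P_\e)$ is non-degenerate at order $\e^2$ is exactly the point that makes the terse argument in the paper rigorous.
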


In the proof of Theorem  \ref{101} we will use the next lemma.

	\begin{lmm}\label{401}
There exist $\e_3>0$ and $C>0$ such that 
if $\e \in (0,\e_3)$ and a function $\Psi(P) \in C^2(\overline{V}_0,\R)$ satisfies 
	\[
		\frac{1}{\e^2} 
		\left\| \Psi - 16 \pi + \frac{8\pi}{3} \e^2 \Sc
		\right\|_{C^2 (\overline{V}_0) } \leq C \e,
	\]
then $\Psi$ has a unique critical point $P_{\e} \in \overline{V}_0$. 
Moreover, as $\e \to 0$, we have $P_{\e} \to P_0$. 
	\end{lmm}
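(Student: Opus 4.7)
\begin{pf} \textbf{(Proof proposal.)} The strategy is to view $\Psi$ as a small $C^2$-perturbation (of size $\e^3$) of the function $P \mapsto 16\pi - \tfrac{8\pi}{3}\e^2 \Sc(P)$, whose critical points coincide with those of $\Sc$, and then to argue by persistence of non-degenerate critical points via a quantitative contraction mapping argument.

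\emph{Setup.} I will set $E(P) := \Psi(P) - 16\pi + \tfrac{8\pi}{3}\e^2 \Sc(P)$, so that by hypothesis $\|E\|_{C^2(\overline V_0)} \leq C \e^3$, and any critical point of $\Psi$ is a zero of
\[
   \nabla \Psi (P) = -\tfrac{8\pi}{3}\e^2 \nabla \Sc (P) + \nabla E(P),
\]
or equivalently of
\[
   \nabla \Sc(P) = \tfrac{3}{8\pi \e^2}\nabla E(P),
\]
where the right-hand side is $O(\e)$ in $C^1(\overline V_0)$.

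\emph{Localization.} The first step is to show all critical points of $\Psi$ lie in a ball of radius $O(\e)$ around $P_0$. By the standing assumption, $\Sc$ has $P_0$ as its unique critical point in $\overline V_0$ and $\Hess \Sc$ is invertible on $\overline V_0$. Taylor expansion at $P_0$ combined with the non-degeneracy of $\Hess \Sc(P_0)$ yields $|\nabla \Sc(P)| \geq c_1 \, d(P,P_0)$ on a small neighborhood of $P_0$, while compactness of $\overline V_0$ together with $P_0$ being the only critical point of $\Sc$ yields a positive lower bound away from $P_0$. Combining, there exists $c_0>0$ with
\[
   |\nabla \Sc(P)| \geq c_0 \, d(P,P_0), \qquad \forall \, P \in \overline V_0.
\]
Consequently, at any zero of $\nabla \Psi$,
\[
   0 \geq \tfrac{8\pi c_0}{3}\e^2 d(P,P_0) - C \e^3,
\]
forcing $d(P,P_0) \leq K_0 \e$ for a constant $K_0 = K_0(c_0,C)$.

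\emph{Contraction mapping near $P_0$.} In normal coordinates $x$ centered at $P_0$, write $H := \Hess \Sc(P_0)$ (invertible) and $\nabla \Sc(x) = Hx + R(x)$ with $|R(x)| \lesssim |x|^2$ and $|DR(x)| \lesssim |x|$. The critical point equation becomes
\[
   x = T_\e(x) := H^{-1}\!\left[\, \tfrac{3}{8\pi \e^2}\nabla E(x) - R(x)\, \right].
\]
Using $\|\nabla E\|_{C^1} \leq C\e^3$ and the bounds on $R$, a routine computation shows that for a suitable $K \geq K_0$ and all sufficiently small $\e$, the map $T_\e$ sends $\overline{B_{K\e}(0)}$ into itself and is a $\tfrac{1}{2}$-contraction there. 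The Banach fixed point theorem then provides a unique $x_\e \in \overline{B_{K\e}(0)}$ with $T_\e(x_\e) = x_\e$; by the localization step this is the unique critical point $P_\e$ of $\Psi$ in all of $\overline V_0$. Since $|x_\e| \leq K\e$, we also get $P_\e \to P_0$ as $\e \to 0$, completing the proof.

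\emph{Main obstacle.} The argument is essentially a persistence-of-non-degenerate-critical-points result, and the only delicate point is to ensure that all constants can be chosen uniformly so that the contraction radius $K\e$ is compatible with the localization radius $K_0 \e$; this requires carefully exploiting that the perturbation $E$ has size $\e^3$ (two powers of $\e$ smaller than the leading $\e^2 \Sc$ term), which exactly balances the quadratic remainder $R(x)$ on $|x| = O(\e)$.
\end{pf}
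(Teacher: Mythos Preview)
Your proof is correct and rests on the same underlying idea as the paper's: persistence of the non-degenerate critical point $P_0$ of $\Sc$ under $C^2$-small perturbations. The paper, however, packages this more economically. Rather than keeping $\Psi$ and tracking powers of $\e$ by hand, it rescales to $\psi_\e(P) := \e^{-2}(\Psi(P) - 16\pi)$, observes that $\|\psi_\e + \tfrac{8\pi}{3}\Sc\|_{C^2(\overline V_0)} \leq C\e$, and then simply invokes the (standing) hypothesis that $\Hess(\Sc)$ is invertible on all of $\overline V_0$: any function $C^2$-close to $\Sc$ inherits an invertible Hessian on $\overline V_0$ and hence has a unique critical point there, which necessarily converges to $P_0$. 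Your contraction-mapping argument is precisely one way to prove that stability statement, so what you have written is an explicit unpacking of the step the paper takes for granted. The payoff of your route is that it yields the quantitative bound $d(P_\e,P_0)=O(\e)$ for free; the paper obtains this (and the sharper $O(\e^2)$) only later, in Step~2 of the proof of Theorem~\ref{thm:foliation}.
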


	\begin{proof}
We first recall that by the choice of $V_0$, 
$P_0$ is the unique critical point of $\Sc$ in $\overline{V}_0$ and 
$\Hess (\Sc)$ invertible on $\overline{V}_0$. 
Then it is easily seen that 
for sufficiently small $\zeta_0>0$, 
if $\psi \in C^2(\overline{V}_0,\R)$ satisfies 
$\| \psi - \Sc \|_{C^2(\overline{V}_0)} \leq \zeta_0$, 
then $\Hess (\psi)$ is invertible on $\overline{V}_0$ and 
$\psi$ has a unique critical point in $\overline{V}_0$. 
Setting $\psi_{\e}(P) := \e^{-2} (\Psi(P) - 16 \pi )$,
 note that $\| \psi_\e - \frac{8\pi}{3}\Sc \|_{C^2(\overline{V}_0)} \leq C \e$  and
$D_P^k \psi_\e = \e^{-2}D_P^k \Psi$ for $k=1,2$. Thus,
for sufficiently small $\e>0$, 
$\Psi$ has a unique critical point $P_\e \in \overline{V}_0$. 
Since $\psi_\e \to \Sc$ in the $C^2$-sense and $P_0$ is the unique 
critical point of $\Sc$, we have 
$P_\e \to P_0$ as $\e \to 0$. 
	\end{proof}

	\begin{proof}[Proof of Theorem \ref{101}]
By Proposition \ref{304} and Lemma \ref{401}, it is enough to prove that 
	\begin{equation}\label{41}
		\frac{1}{\e^2} 
		\left\| \Phi_\e - 16\pi + \frac{8\pi}{3} \e^{2} \Sc \right\|_{C^2(\overline{V}_0)} \leq C \e
	\end{equation}
where $C>0$ is independent of $\e$. 
To this aim, we decompose $\Phi_\e - 16\pi + \frac{8\pi}{3} \e^{2} \Sc$ as follows: 
	\[
		\Phi_\e(P) - 16 \pi + \frac{8 \pi}{3} \e^{2} \Sc_P 
		= \left( \Phi_\e(P) - W_{g_\e} ( \Sigma_{\e,P}[0] ) \right) 
		+ \left( W_{g_\e} ( \Sigma_{\e,P}[0] ) - 16 \pi + 
		\frac{8 \pi}{3} \e^{2} \Sc_P  \right). 
	\]

		For the latter part, we notice that 
in the $C^0$-sense, we have 
	\begin{equation}\label{42}
		\frac{1}{\e^2}\left\| W_{g_{\e}}( S^2_{\e,(\cdot)}[0] ) 
		- 16 \pi + \frac{8\pi}{3} \e^{2} \Sc_{(\cdot)}  \right\|_{L^\infty(\overline{V}_0)}
		\leq C \e
	\end{equation}
where $C>0$ is independent of $\e$. 
For instance, see \cite{M-10,LM-10,IMM-1}. 
For the $C^2$-estimate of \eqref{42}, we provide here a self-contained argument; later, in Lemma \ref{lmm:geomExp}, we will give sharper estimates building on top of \cite{M-JGA}. 
Recalling the expansion of $g_\e$ in \eqref{31} and \eqref{32}, 
 setting $t = \e^2$ and 
	\[
		g_{t,P,\a \b} (y) := \delta_{\a \b} + t h_{P,\a \b}^{\sqrt{t}} (y), 
	\]
we can check that 
$t \mapsto D_{(P,y)}^k g_{t,P,\a \b}$ is of class $C^{1,1/2}$ at $t=0$ 
for each $k \in \N$. 
Hence, writing $W_{t,P}$ for the Willmore functional with respect to the metric $g_{t,P}$, 
we observe that the map $t \mapsto D_P^k W_{t,P}( S^2 )$ 
is also of class $C^{1,1/2}$ in $t$. 
Thus we have  
	\[
		\begin{aligned}
			& D_P^k 
			\left( W_{t,P}( S^2 ) - W_{0,P} ( S^2 )
			- \frac{\partial}{\partial s} W_{s,P} ( S^2 ) 
			\Big|_{s=0} t 
			  \right) 
			  \\
		= &  D_P^k \int_{0}^t 
		\left( \frac{\partial }{\partial s} W_{s,P}( S^2 ) 
		- \frac{\partial}{\partial s} W_{s,P} ( S^2 ) \Big|_{s=0} 
		\right) \rd s 
		= O(t^{3/2}). 
		\end{aligned}
	\]
>From the $C^0$-estimate, we deduce that 
	\[
		\frac{\partial}{\partial s} W_{s,P} ( S^2 ) \Big|_{s=0}
		=- \frac{8 \pi}{3} \Sc_P.
	\]
Noting that $W_{0,P} (S^2 ) = 16 \pi$ and $t=\e^2$, it follows that 
	\begin{equation}\label{43}
		\frac{1}{\e^2}\left\| W_{g_{\e}}( S^2_{\e,(\cdot)}[0] ) 
		- 16 \pi + \frac{8\pi}{3}\e^{2} \Sc_{(\cdot)}  \right\|_{C^2(\overline{V}_0)}
		\leq C \e.
	\end{equation}

In order to conclude the proof of Theorem \ref{101}, we are left with showing:
	\begin{equation}\label{44}
		\frac{1}{\e^2}\left\| \Phi_\e(\cdot) -  W_{g_{\e}}( S^2_{\e,({\cdot})}[0] ) 
		\right\|_{C^2(\overline{V}_0)}
		\leq C \e.
	\end{equation}
For this purpose, let us denote by $X_{\e,P,s}(q)$ a position vector 
for $S^2_{\e,P} [s \varphi_{\e,P}]$ with $ 0 \leq s \leq 1$, namely, 
	\[
		X_{\e,P,s}(q) := \left(1 + s \varphi_{\e,P} (q) \right) q \quad 
		{\rm for\ } q \in S^2, \ 0 \leq s \leq 1.
	\]
We also write $n_{\e,P,s}(q)$ ($q \in S^2$) for 
the outer unit normal to $S^2_{\e,P}[s \varphi_{\e,P}]$. Recall that 
	\[
		\left\| D_P^k n_{\e,P,s} \right\|_{C^{4,\a}(S^2)} \leq C \e^2
		\quad {\rm for}\ 0 \leq s \leq 1, \ k=0,1,2, 
	\]
where $C$ is independent of $s$ and $\e$. 
Thus setting 
	\[
		\psi_{\e,P,s}(q) := g_{\e,P}(X_{\e,P,s}(q)) [ \varphi_{\e,P} (q) , n_{\e,P,s} (q) ]
		\quad {\rm for}\ q \in S^2
	\]
and recalling the estimates of $\varphi_{\e,P}$ in Proposition \ref{303}, 
it follows that 
	\begin{equation}\label{eq:psi}
		\| D_P^k \psi_{\e,P,s} \|_{C^{4,\a}(S^2)} \leq C\e^2 
		\quad {\rm for}\ k=0,1,2, \ 0 \leq s \leq 1.
	\end{equation}
Furthermore, since 
$\| D_P^k g_{\e,P} \|_{C^\ell} \leq C_{\ell} \e^2$ holds 
for every $k=1,2$ and $\ell \in \N$, 
the estimates for $\varphi_{\e,P}$ and a similar argument 
to the proof of Lemma \ref{301} imply 
	\[
		\left\| D_P^k W_{g_\e}'( S_{\e,P} [s \varphi_{\e,P}]  )  
		\right\|_{C^0(S^2)} \leq C \e^2 \quad 
		{\rm for}\ k=0,1,2, \ 0 \leq s \leq 1.
	\]
Now from 
	\begin{equation}\label{eq:diff-e0}
		\begin{aligned}
			D_P^k \left(\Phi_\e(P) - W_{g_\e} (S_{\e,P}^2[0] ) \right)
			&= D_P^k\left( W_{g_\e} (S_{\e,P} [\varphi_{\e,P}] )  
			- W_{g_\e} (S_{\e,P}^2[0] ) \right)
			\\
			& = \int_0^1 D_P^k \frac{d}{d s} 
			W_{g_\e}( S_{\e,P} [s \varphi_{\e,P}] ) d s 
			\\
			& = \int_0^1 D_P^k \left(
			W_{g_\e}' ( S_{\e,P} [s \varphi_{\e,P}] ) [ \psi_{\e,P,s} ]\right) ds,
		\end{aligned}
	\end{equation}
it follows that 
	\begin{equation}\label{eq:DkPhiWS}
		\left\| D_P^k \left(\Phi_\e(\cdot) - W_{g_\e} (S_{\e,(\cdot)}^2[0] ) \right)
		 \right\|_{L^\infty(\overline{V}_0)} 
		 \leq C \e^4 
	\end{equation}
for $k=0,1,2$. Hence \eqref{44} holds and the claim  \eqref{41} is a consequence of \eqref{43} and  \eqref{44}.
The combination of  Proposition \ref{304} and Lemma \ref{401} gives directly all the claims of Theorem \ref{101}; we just briefly add some details regarding the index identity.  
\\Note that the indexes of $P_{\e}$ as a critical point of $\Phi_{\e}$  and of $-\Sc$ agree thanks to \eqref{41}; moreover, since  $W_{g_0}''(S^{2})$ is positive definite on the orthogonal complement to its kernel (made of constant and affine functions) and $\|\varphi_{\e,P_{\e}}\|_{C^{5,\alpha}(S^{2})}\leq C\e^{2}$, it holds that  $W_{\g_\e}''(\Sigma_{\e})$ is positive-definite on the $L^{2}$-orthogonal complement to $\{\psi_{i,\e,P_{\e}}\}_{i=1,2,3}$ defined in \eqref{eq:defPsiieP}. Observing that the index of $W_{g_\e}''(\Sigma_{\e})$ in the direction of the span of $\{\psi_{i,\e,P_{\e}}\}_{i=1,2,3}$ coincides with the index of $P_{\e}$ as a critical point of $\Phi_{\e}$ and that the only missing direction is fixed by area-constraint, the claim on the index identity follows.
	\end{proof}
	
\bigskip

\noindent
We next prove  multiplicity  of area-constrained Willmore spheres for prescribed (small) area.

\begin{proof}[Proof of Theorem \ref{thm:multS}]
Thanks to Remark \ref{rem:PhieGlobal}, if $M$ is closed, then  we can define globally the reduced functional $\Phi_{\e}:M\to \R$ as
	 \[
		\Phi_\e(P) := W_{g_\e} ( \Sigma_{\e,P} [\varphi_{\e,P}]  ) \in C^2(M,\R) \quad  \forall \e\in (0,\bar{\e}],\text{ for some $\bar{\e}=\bar{\e}(M)>0$}.
	\]
Moreover $P_{\e}$ is a critical point of $\Phi_{\e}$ if and only if the perturbed geodesic sphere  $\Sigma_{\e,P} [\varphi_{\e,P}]$ is an area-constrained Willmore surface of area $4\pi \e^{2}$. 
\\Note that if $P_{1}^{\e}\neq P_{2}^{\e}$ are distinct critical points of $\Phi_{\e}$, then the corresponding area-constrained Willmore surfaces $\Sigma_{\e,P_{1}^{\e}} [\varphi_{\e,P_{1}^{\e}}]$  and $\Sigma_{\e,P_{2}^{\e}} [\varphi_{\e,P_{2}^{\e}}]$ are also distinct since by (iii) in Proposition \ref{303} the graph function $\varphi_{\e,P}$ is $L^{2}$-orthogonal to the translations $\{Y_{j,\e,P}\}_{j=1,2,3}$. 
\\The claim of Theorem \ref{thm:multS} thus reduces to establish  multiplicity of the critical points of $\Phi_\e:M\to \R$. 
>From the Lusternik-Schnirelman theory (see for instance \cite[Theorem 1.15]{CLOT}), the number of critical points of a real valued $C^{2}$-function on $M$ is bounded below by $\Cat(M)+1$ and, for a closed 3-dimensional manifold, the value for $\Cat(M)$ is computable in terms of the fundamental group \cite[Corollary 4.2]{GG}: 
\begin{itemize}
\item $\Cat(M)=1$,  if $M$ is simply connected (i.e. if and only if  $M$ is diffeomorphic to $S^3$ by the recent proof of  Poincar\'e's conjecture);
\item $\Cat(M)=2$  if $\pi_1(M)$ is a nontrivial free group;
\item $\Cat(M)=3$  otherwise.
\end{itemize}
Therefore, Theorem \ref{thm:multS} then holds.
\end{proof}

\section{Foliation}\label{0005}

The goal of this section is to prove Theorem \ref{thm:foliation}, namely the existence and uniqueness of a foliation by area-constrained Willmore spheres of a neighborhood of a non-degenerate critical point $P_{0}$ of the scalar curvature.

Before proving Theorem \ref{thm:foliation} in detail, let us briefly discuss what is the main geometric extra difficulty in establishing that the area-constrained Willmore spheres $\Sigma_{\e}$ constructed in Theorem  \ref{101} form a foliation.
\\The main point is to show that the {\em centers} $P_{\e}$ of $\Sigma_{\e}$ converge fast enough, say at order $O(\e^{2})$, to $P_{0}$  when compared to the shrinking {\em radius} of the spheres (which is of order $O(\e)$).
This is best explained with an example: the round spheres $\Sigma_{\e}\subset \R^{3}$  in the Euclidean 3-dimensional space, of center $(\e,0,0)$ and radius $\e$ are clearly  (area-constrained) Willmore spheres concentrating at the origin $(0,0,0)$ but do not form a foliation (as they are not pairwise disjoint).
\\Showing that ${\rm d}_{g}(P_{0},P_{\e})=O(\e)$ is straightforward: just recall that $P_{\e}$ is a critical point of $\Phi_{\e}$ and combine \eqref{41} with the assumption that  $P_{0}$ is a non-degenerate critical point of $\Sc$ (so that there exists $C_{\Sc}>0$ with 
$
|\nabla \Sc(P)|\geq  C_{\Sc}\, {\rm d}_{g}(P_{0},P)$ near $P_{0}$). On the other hand,  the estimate ${\rm d}_{g}(P_{0},P_{\e})=O(\e^{2})$ requires more work.
\\ The rough idea is to exploit the symmetry/anti-symmetry of the terms in the geometric expansions  in order to show that the term of order $O(\e)$ vanishes. To this aim, we start by recalling the expressions of the terms involved in the Willmore equation on a small geodesic sphere, see for instance \cite[Section 3.1]{M-JGA}. For $P\in \overline{V}_0$, we set $\Sigma_{\e,P}^{0} := \exp_P^{g} ( \e S^2 )$ where $\e S^{2}\subset T_{P}M\simeq \R^{3}$ is the round sphere of radius $\e$ parametrized by $q\in S^{2}$.
%We also denote by $D:=k_{1}k_{2}$ the product of the principal curvatures of $\Sigma$.  
Since in the arguments it will be enough to know whether a term is odd with respect to  the antipodal map  $q\mapsto-q$ of $S^{2}$, 
we will use the following shorthand notation:
	\[
		\text{$\cO:S^{2}\to \R$ will denote an arbitrary \emph{odd} smooth function, i.e. $\cO(-q)=-\cO(q)$;}
	\]
%\begin{itemize}
%\item $\cO:S^{2}\to \R$ will denote an arbitrary \emph{odd} smooth function, i.e. $\cO(-q)=-\cO(q)$;
%\item $\cE:S^{2}\to \R$ will denote an arbitrary \emph{even} smooth function, i.e. $\cE(-q)=\cE(q)$.
%\end{itemize}
In order to keep the notation short, the functions $\cO$ will be allowed to vary from formula to formula and also within the same line; 
moreover $\cO$ will depend on $P$ smoothly with $D^k_P \cO (-q) = - D^k_P \cO(q)$, but be independent of the parameter $\e$.

\begin{lmm}\label{lmm:geomExp}
The following expansions hold: at $q \in S^2$ and $P \in \ov{V}_0$, 
\begin{align}
&H_{\Sigma_{\e,P}^{0}}= \frac{2}{\e} - \frac{\e}{3} \Ric_P \left( q, q \right) + \cO \e^2 + O_{C^2}(\e^3),
%\frac{2}{\e}+ \cE \e+\cO \e^{2}+\cE \e^{3}+O(\e^{4}), 
\label{eq:HSeP} \\
%&\frac{1}{4}\left(H_{\Sigma_{\e,P}^{0}}\right)^{2}-D_{\Sigma_{\e,P}^{0}}=\cE \e^{2}+O(\e^{3}), \label{eq:H2-4DSeP}\\
%&\Delta_{\Sigma_{\e,P}^{0}} \cE =\frac{1}{\e^{2}} \cE+\cE +O(\e), \quad  \Delta_{\Sigma_{\e,P}^{0}} \cO =\frac{1}{\e^{2}} \cO+\cO +O(\e) \label{eq:DeltaSigmaePu}\\
%& \Delta_{\Sigma_{\e,P}^{0}} H_{\Sigma_{\e,P}^{0}}=  \frac{1}{\e} \cE+ \cO +\cE \e +O(\e^{2})  \label{eq:DeltaSigmaePH}\\
%& \Ric (n_{\Sigma_{\e,P}^{0}},n_{\Sigma_{\e,P}^{0}})=\cE+ \cO \e+ \cE \e^{2} + O(\e^{3})  \label{eq:RiccinSigmaePH}\\
&d\sigma_{\Sigma_{\e,P}^{0}}= \e^2 \left( 1 - \frac{\e^2}{6} \Ric_P (q,q) + \cO \e^3 + O_{C^2}(\e^4) \right) 
d \sigma_{S^2},
%\e^{2} d\sigma_{S^{2}}(1+\cE \e^{2}+O(\e^{3})) 
\label{eq:dvolSigmaePH}
\\
& W_g \left( \Sigma_{\e,P}^{0} \right) 
= 16 \pi - \frac{8\pi}{3} \e^2 \Sc_P + O_{C^2}(\e^4),
\label{eq:WE}
\\
& \frac{\partial }{\partial \e} W_g \left(  \Sigma^0_{\e,P} \right) 
= - \frac{16\pi}{3} \e \Sc_P + O_{C^2} \left( \e^3 \right)
\label{eq:WEdiff}
\end{align}
where $d\sigma_{S^2}$ denotes the area element induced from the Euclidean metric and 
the terms $O_{C^2}(\e^k)$ satisfy $ \sum_{i=0}^2 \| D_P^i O_{C^2} (\e^k) \|_{L^\infty(S^2)} \leq C_0 \e^k$. 
%where $\Delta_{\Sigma_{\e,P}^{0}}$ is the Laplace-Beltrami operator of $\Sigma_{\e,P}^{0}$,  $n_{\Sigma_{\e,P}^{0}}(q)$ is the outer unit normal to $\Sigma_{\e,P}^{0}$ at the point $\exp_P^{g} ( \e q )$, and $d\sigma_{\Sigma_{\e,P}^{0}}$ is the area-form on $\Sigma_{\e,P}^{0}$; of course, all induced by the Riemannian metric $g$.
\end{lmm}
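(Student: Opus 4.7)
The plan is to derive (\ref{eq:HSeP}) and (\ref{eq:dvolSigmaePH}) first, by a direct Taylor expansion in $g$-normal coordinates centered at $P$, and then obtain (\ref{eq:WE}) and (\ref{eq:WEdiff}) by integrating the resulting expansion over $S^2$ and exploiting the antipodal symmetry $q\mapsto -q$.

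First I would write the metric in $g$-normal coordinates at $P$ as
\[
g_{ij}(y) = \d_{ij} - \tfrac{1}{3} R_{ikjl}(P) y^k y^l - \tfrac{1}{6} (\n_m R_{ikjl})(P) y^k y^l y^m + O(|y|^4),
\]
where the remainder is smooth in $(P,y)$ and uniformly bounded in $P \in \ov{V}_0$, and parametrize $\Sigma^0_{\e,P}$ by $q\in S^2$ via $y = \e q$. The crucial observation is the parity of the expansion in $\e$: even powers of $\e$ contribute even functions of $q$ while odd powers contribute odd ones, and in both cases the coefficients are smooth in $P$. Standard formulas (compare with \cite{M-JGA}) then yield
\[
H_{\Sigma^0_{\e,P}} = \tfrac{2}{\e} - \tfrac{\e}{3}\Ric_P(q,q) + A(P,q)\,\e^2 + O_{C^2}(\e^3),
\]
where $A(P,q)$ comes precisely from the cubic $\n R$ term in the metric expansion and is therefore odd in $q$, hence of class $\cO$; an analogous computation of the induced area form produces (\ref{eq:dvolSigmaePH}), with the $\e^3$-coefficient again sourced by $\n R$ and so of class $\cO$. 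The $C^2$ control in $P$ of the remainders follows from smoothness of $g$ and of $\exp_P^g$ together with compactness of $\ov{V}_0$.

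With these at hand, I would multiply the two expansions to obtain
\[
H^2\, d\sigma_{\Sigma^0_{\e,P}} = \bigl(4 - 2\e^2\,\Ric_P(q,q) + \cO\,\e^3 + O_{C^2}(\e^4)\bigr)\, d\sigma_{S^2},
\]
since all cross-terms between the leading constants and the $\cO$-contributions remain odd. Integrating over $S^2$, using $\int_{S^2} q^i q^j\, d\sigma_{S^2} = \tfrac{4\pi}{3}\d^{ij}$ (so that $\int_{S^2}\Ric_P(q,q)\, d\sigma_{S^2} = \tfrac{4\pi}{3}\Sc_P$), together with the vanishing of integrals of odd functions of $q$, gives (\ref{eq:WE}); the improvement from $O(\e^3)$ to $O(\e^4)$ in the remainder comes precisely from this antipodal cancellation. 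For (\ref{eq:WEdiff}) I would use that the construction above actually provides a Taylor expansion of $W_g(\Sigma^0_{\e,P})$ jointly smooth in $(\e,P)$ near $\e=0$; differentiating term-by-term in $\e$ and invoking once more the parity cancellation of odd-in-$\e$ contributions yields $\partial_\e W_g(\Sigma^0_{\e,P}) = -\tfrac{16\pi}{3}\e\,\Sc_P + O_{C^2}(\e^3)$.

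The main obstacle is the parity bookkeeping: one must verify that every correction arising from an even-order term in the normal-coordinate expansion of $g$ contributes to $H^2\, d\sigma$ only via even functions of $q$, while odd-order terms enter only in combination with odd functions and therefore integrate to zero over $S^2$. Once this is organized, the $C^2$-in-$P$ bounds on the remainders reduce to routine Taylor-type estimates using the smoothness of $g$ and the compactness of $\ov{V}_0$.
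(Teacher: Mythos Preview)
Your proposal is correct and follows essentially the same route as the paper: expand the metric in normal coordinates at $P$, derive \eqref{eq:HSeP} and \eqref{eq:dvolSigmaePH} (the paper simply cites \cite[Lemmas 3.3 and 3.5]{M-JGA} for the $C^0$ statements and upgrades to $C^2$ in $P$ by smooth dependence), multiply to get $H^2\,d\sigma$, and integrate using the antipodal cancellation of the $\cO$-terms together with $\int_{S^2}\Ric_P(q,q)\,d\sigma_{S^2}=\tfrac{4\pi}{3}\Sc_P$. The only place where the paper is marginally more explicit is \eqref{eq:WEdiff}: rather than invoking joint smoothness and differentiating the finished expansion term-by-term, it differentiates the normal-coordinate expansion of the metric in $\e$ first, checks that the remainder's $\e$-derivative is $O(\e^4)$, and then reruns the integration argument---which is exactly what makes your ``differentiate term-by-term'' step rigorous.
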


\begin{proof}
We note that these results were essentially obtained in \cite{M-JGA}. 
In fact, 
using a local orthonormal frame $\{F_{P,1}, F_{P,2} , F_{P,3} \}_{P \in \ov{V}_0}$ as in the beginning of section \ref{3000}, 
$(\exp_P)^\ast g$ has the following expansion (see \cite{LP-87} and \cite[Proposition 2.1]{PX}):
	\[
		\begin{aligned}
		&\left((\exp_P^g )^\ast g\right) (x) \left[ F_{P,\a}, F_{P,\b} \right] 
		\\
		 = \, & \delta_{\a \b} + \frac{1}{3} g_P \left[ R_P  (\Xi , F_{P,\a} ) \Xi , F_{P,\b}  \right]
		+ \frac{1}{6} g_P\left[ \nabla_{\Xi} R_P  (\Xi , F_{P,\a}) \Xi, F_{P,\b}  \right] 
		 + \frac{1}{20} g_P \left[ \nabla_{\Xi} \nabla_{\Xi} R_P 
		(\Xi , F_{P,\a}) \Xi, F_{P,\b} \right] 
		\\
		& 
		+ \frac{2}{45} g_P \left[ R_P (\Xi ,F_{P,\a}) \Xi , F_{P,\g} \right] 
		g_P \left[ R_P ( \Xi , F_{P,\b} ) \Xi , F_{P,\g} \right] 
		+ {\rm Rem}_{\a \b}(x,P)
		\end{aligned}
	\]
where $\Xi := x^\alpha F_{P,\a}$ and $| D_P^k {\rm Rem}_{\a \b} (x,P) | \leq C |x|^5$ for every 
$|x| \leq \rho_0$, $P \in \ov{V}_0$ and $k \in \N$. 
Since the expansions \eqref{eq:HSeP} and \eqref{eq:dvolSigmaePH} in the $C^0$ sense are obtained in 
\cite[Lemmas 3.3 and 3.5]{M-JGA}, by the smooth dependence on $P$ of the metric, we can also show 
\eqref{eq:HSeP} and \eqref{eq:dvolSigmaePH} in the $C^2$ sense.

	For \eqref{eq:WE}, it follows from \eqref{eq:HSeP} that at $q = q^\a F_{P,\a}$ with $q \in S^2$,
	\[
		H^2_{\Sigma_{\e,P}^{0}} (q)
		= \frac{1}{\e^2} 
		\left( 4 - \frac{4}{3} \e^2 \Ric_P( q, q ) + \cO \e^3 + O_{C^2}(\e^4)  \right).
	\]
Noting $D_P^k \cO (-q) = - D_P^k \cO(q) $ and $\int_{S^2} \cO d \sigma_{S^2} = 0$, 
we observe from \eqref{eq:dvolSigmaePH} that 
	\[
		\begin{aligned}
			W_g \left( \Sigma_{\e,P}^{0} \right) 
			&= \int_{S^2} \left[ 4 - 2\e^2 \Ric_P (q,q) + \cO \e^3 + O_{C^2}(\e^4) \right] 
			d \sigma_{S^2}= 16\pi - \frac{8\pi }{3} \e^2 \Sc_P + O_{C^2} (\e^4).
		\end{aligned}
	\]

	Finally, for \eqref{eq:WEdiff}, we notice that 
	\[
		\begin{aligned}
			& \frac{\partial}{\partial \e} \left((\exp_P^g )^\ast g\right) (\e x) \left[ F_{P,\a}, F_{P,\b} \right] 
			\\
		= \, &  \frac{2\e}{3} g_P \left[ R_P  (\Xi , F_{P,\a} ) \Xi , F_{P,\b}  \right]  
			+ \frac{\e^2}{2} g_P\left[ \nabla_{\Xi} R_P  (\Xi , F_{P,\a}) \Xi, F_{P,\b}  \right] 
			+ \frac{\e^3}{5} g_P \left[ \nabla_{\Xi} \nabla_{\Xi} R_P 
			(\Xi , F_{P,\a}) \Xi, F_{P,\b} \right] 
			\\
			& 
			+ \frac{8\e^3}{45} g_P \left[ R_P (\Xi ,F_{P,\a}) \Xi , F_{P,\g} \right] 
			g_P \left[ R_P ( \Xi , F_{P,\b} ) \Xi , F_{P,\g} \right] 
			+ \frac{\partial }{\partial \e} {\rm Rem}_{\a \b}(\e x,P).
		\end{aligned}
	\]
We also remark that the last term satisfies 
	\[
		\left| D_x^k D_P^\ell \frac{\partial }{\partial \e} {\rm Rem}_{\a \b}(\e x,P)  \right| 
		\leq C_{k,\ell} \e^4
	\]
for any $k,\ell \in \N$. From these facts and the proof for \eqref{eq:WE}, it is not difficult to check \eqref{eq:WEdiff} 
and we complete the proof. 
\end{proof}

\begin{proof}[Proof of Theorem \ref{thm:foliation}]
>From Theorem \ref{101} we know that, for $\e_0>0$ small enough, for each $\e \in (0,\e_0)$
there exists an area-constrained Willmore  sphere
\begin{equation}\label{eq:defSigmae}
\Sigma_{\e}:=\Sigma_{\e,P_{\e}}[\varphi_{\e,P_{\e}}] := \exp_{P_{\e}}^{g} ( \e S^2[\varphi_{\e,P_{\e}}] ), \quad |\Sigma_\e|_g = 4 \pi \e^2,
\quad P_\e \to P_0. 
\end{equation}
Recall also that $\Sigma_\e$ is a critical point of $\Phi_\e$. 
We also  denote $\Sigma_{\e,P}^{0}:=\Sigma_{\e,P}[0]$.
\\

\textbf{Step 1}. For a suitable neighborhood $U$ of $P_0$ and $j=0,1$, 
	\begin{equation}\label{eq:nablaPhinablaScC1}
		\left\| D_\e^j \left( D_P \Phi_{\e} + \frac{8\pi}{3} \e^{2} \, D_P \Sc_{(\cdot)} \right)   \right\|_{C^{1}(U)}\leq C \e^{4-j}.
	\end{equation}

\noindent
To this aim, for all $P\in U \subset \ov{V}_0$, we first remark that 
\begin{equation*}\label{eq:defFNablaSc}
	\begin{aligned}
		& \left| D_\e^j \left( D_P \Phi_{\e}(P) + \frac{8\pi}{3} \e^{2} \, D_P \Sc_P \right)   \right|
		\\
		\leq & \,  
		\left| D_\e^j \left[D_P \Phi_{\e}(P) - D_{P}(W(\Sigma_{\e,P}^{0}))  \right] \right| 
		+ \left| D_\e^j \left[ D_{P}(W(\Sigma_{\e,P}^{0})) + \frac{8\pi}{3} \e^{2} \, D_P \Sc_P  \right]   \right|.
	\end{aligned}
\end{equation*}
Recalling $W_g(\Sigma^0_{\e,P}) = W_g ( \exp^g_P(\e S^2) ) = W_{g_\e} (\exp^{g_\e}_P (S^2))$ and \eqref{eq:diff-e0}, 
we know that 
	\[
		D_P \left( \Phi_\e - W_g \left( \Sigma^0_{\e,P} \right) \right) 
		= \int_0^{1} D_P \left( W_{g_\e}' \left( S_{\e,P}[ s \varphi_{\e,P}] \right) \left[ \psi_{\e,P,s} \right]  \right) d s.
	\]
By Lemmas \ref{301} and \ref{302}, Proposition \ref{303} and \eqref{eq:psi}, we may observe that for $j=0,1$, 
\begin{equation*}\label{eq:nablaPhiDpW}
 \left\| D_\e^j D_{P} \left( \Phi_{\e} - W_g\left(\Sigma_{\e,(\cdot)}^{0}\right)\right) \right\|_{C^{1}(U)}\leq C\e^{4-j}. 
\end{equation*}
Thus, in order to get \eqref{eq:nablaPhinablaScC1} it is enough to prove that
\begin{equation*}\label{eq:nablaPhiDpW-2}
 \left\| D_\e^j D_{P} \left(W_g \left(\Sigma_{\e,(\cdot)}^{0}\right) + \frac{8\pi}{3} \e^{2} \, \Sc_{(\cdot)} \right) \right\|_{C^{1}(U)}\leq C\e^{4-j}
\end{equation*}
for $j=0,1$. 
But this is easily seen from $D_P 16\pi = 0$ and Lemma \ref{lmm:geomExp}. 
Thus, Step 1 holds.

%In order to show \eqref{eq:nablaPhiDpW} we observe that $D_{P} W(\Sigma_{\e,P}^{0})$, as a vector of $\R^{3}$, has components
%\begin{equation}\label{eq:DPWi}
%D_{P} W(\Sigma_{\e,P}^{0})^{i}=\int_{\Sigma_{\e,P}^{0}} W'_{g}(\Sigma_{\e,P}^{0}) \, g\Big(n_{\Sigma_{\e,P}^{0}}, \frac{\partial}{\partial x^{i}}\Big)  \, d\sigma_{\Sigma_{\e,P}^{0}}, \quad i=1,2,3.
%\end{equation}
%As usual, $W'_{g}(\Sigma_{\e,P}^{0})$ is the differential of the Willmore functional in metric $g$ on $\Sigma_{\e,P}^{0}$ which can be written as
%\begin{align}
%W'_{g}(\Sigma_{\e,P}^{0})&= 2 \Delta_{\Sigma_{\e,P}^{0}} H_{\Sigma_{\e,P}^{0}}+H_{\Sigma_{\e,P}^{0}}\big(H_{\Sigma_{\e,P}^{0}}^{2}-4D_{\Sigma_{\e,P}^{0}}+2\Ric(n_{\Sigma_{\e,P}^{0}}, n_{\Sigma_{\e,P}^{0}})  \big)\nonumber\\
%&=\frac{1}{\e} \cE+\cO+\e \cE+O(\e^{2}), \label{eq:ExpW'SigmaeP0}
%\end{align}
%where in the second identity we used Lemma \ref{lmm:geomExp}. Combining \eqref{eq:dvolSigmaePH}, \eqref{eq:GaussLm},\eqref{eq:DPWi} and \eqref{eq:ExpW'SigmaeP0} we obtain
%\begin{equation}\label{eq:DPWiExp}
%D_{P} W(\Sigma_{\e,P}^{0})^{i}= \int_{S^{2}}  \left( \e \cO+\e^{2} \cE +\e^{3} \cO + O(\e^{4})\right) \, d\sigma_{S^{2}}, \quad i=1,2,3.
%\end{equation}
%Clearly, $\int_{S^{2}} \cO d\sigma_{S^{2}}=0$. Therefore, the combination of \eqref{41} and \eqref{eq:DPWiExp} yields  \eqref{eq:nablaPhiDpW}. 
%\\Note that, since \eqref{41} and  \eqref{eq:DkPhiWS} involve $C^{2}$-estimates, the arguments above show also the $C^{1}$-estimate

\medskip

\textbf{Step 2}. ${\rm d}_{g}(P_{\e},P_{0})\leq C \e^{2}$. 
%\\By construction $P_{\e}$ is a critical point of the reduced functional $\Phi_{\e}$, i.e. $P_{\e}$ solves $\nabla \Phi_{\e}(P_{\e})=0$. In order to get the claim of Step 1, it is enough to show that 
%\begin{equation}\label{eq:nablaPhinablaSc}
%\left\|\nabla \Phi_{\e}- \frac{8\pi}{3} \e^{2} \, \nabla \Sc   \right\|_{C^{0}(U)}\leq C \e^{4},
%\end{equation}
%for a suitable neighbourhood $U$ of $P_{0}$. Indeed, since by assumption $P_{0}$ is a non-degenerate critical point of $\Sc$,  we know that there exists $C_{\Sc}>0$ such that
%\begin{equation}\label{eq:nablaScCsc}
%|\nabla \Sc(P)|\geq  C_{\Sc}\, {\rm d}_{g}(P_{0},P), \quad \text{for all }P\in U,
%\end{equation}

\noindent
Since $P_\e$ is a critical point of $\Phi_\e$ and we may assume $P_\e \in U$, 
Step 1 gives 
\begin{align*}
0=|D_P \Phi_{\e} (P_{\e})|\geq |D_P \Sc_{P_{\e}}|\e^{2}- C\e^{4}\geq  C_{\Sc} \, {\rm d}_{g}(P_{0},P_{\e}) \e^{2} - C\e^{4},
\end{align*}
which yields the claim  ${\rm d}_{g}(P_{\e},P_{0})\leq (C/C_{Sc}) \e^{2}$.

\medskip

\textbf{Step 3}.  The surfaces $\{\Sigma_{\e}\}_{\e\in (0,\e_{0})}$, defined  in \eqref{eq:defSigmae}, form a foliation of $U\setminus P_{0}$. 
\\We will work with the following parametrisation of $\Sigma_{\e}$:
\[
		S^2[\varphi_{\e,P_{\e}}] := 
		\left\{ \big(1+ \varphi_{\e,P_{\e}}(q)\big) \, q \ |\ q \in S^2  \right\} \subset \R^3\simeq T_{P_{\e}}M, \quad 
		\Sigma_{\e,P_{\e}}[\varphi_{\e,P_{\e}}] := 
		\exp_{P_{\e}}^{g} ( \e S^2[\varphi_{\e,P_{\e}}] ).
	\]
	 Set
\begin{equation*}\label{eq:defF}
F:S^2\times (0,\varepsilon_0) \to M, \quad 
F(q,\e):= \exp_{P_{\e}}^{g}\Big( \e\big(1+ \varphi_{\e,P_{\e}}(q)\big) q   \Big).
\end{equation*}
For $\e_{0}>0$ small enough, we claim that $F(S^2\times (0,\varepsilon_0))=U\setminus\{P_{0}\}$ and that $F$ is a diffeomorphism onto its image. 
\\First of all we show that $F$ is smooth. Thanks to Proposition \ref{303}, the map $(\e, P)\mapsto \varphi_{\e,P}$ is smooth. 
Also the map $\e\mapsto P_{\e}$  is smooth. Indeed $P_{\e}$ is defined as the unique solution in $U$ of 
	\[
		0= D_P \Phi_{\e}=  -  \frac{8\pi}{3} \e^{2}  D_P\Sc +O_{C^{1}}(\e^{4})
	\]
where in the last identity we used \eqref{eq:nablaPhinablaScC1}; since by assumption $P_{0}$ is a non-degenerate critical point of $\Sc$, the Implicit Function Theorem guarantees the smoothness of $\e\mapsto P_{\e}$.
We thus conclude that $F$ is smooth, as composition of smooth maps.
Moreover, differentiating $0= D_P \Phi_\e(P_\e)$ in $\e$ and using \eqref{eq:nablaPhinablaScC1}, we obtain 
	\[
		D_P^2 \Phi_\e (P_\e) \frac{d P_\e}{d \e} = - D_\e D_P \Phi_\e (P_\e) = \frac{16\pi}{3} \pi \e D_P \Sc_{P_\e} 
		+ O(\e^3). 
	\]
By $D_P \Sc |_{P=P_0} = 0$, ${\rm d}_g (P_\e,P_0) = O(\e^2)$ due to Step 2 and 
$(D_P^2 \Phi_\e )^{-1}|_{P=P_\e} = O(\e^{-2})$ thanks to \eqref{eq:nablaPhinablaScC1}, we observe that 
\begin{equation}\label{eq:dPede}
\left|\frac{dP_{\e}}{d\e}\right|\leq C \e \quad  \text{for } \e\in (0,\e_{0}).
\end{equation}

We next claim that there exists $C>0$ (independent of $\e$) such that
\begin{equation}\label{eq:claimdFde}
\left|g\left( \frac{\partial}{\partial \e} F(q,\e), n_{\e}(q) \right)-1\right| \leq C\e \quad \text{for every  } (q,\e) \in S^2\times (0,\varepsilon_0),
\end{equation}
where $n_{\e}(q)$ is the outer unit normal to $\Sigma_{\e}=F(S^{2}\times \{\e\})$ at $F(q,\e)$.
To this aim we compute:
\begin{align}
 \frac{\partial F}{\partial \e}(q,\e)&= 
 \left( D_P \exp_P^g (v) \right) \Big|_{(P,v) = (P_\e , \e (1+\varphi_{\e,P_\e} ) q) } \frac{dP_{\e}}{d\e} 
 \nonumber
 \\
 & \quad 
 +  (D_{v} \exp_{P_{\e}}^{g} (v))\big|_{v= \e(1+ \varphi_{\e,P_{\e}}(q)) q} \left( \Big( 1+\varphi_{\e,P} + \frac{\partial \varphi_{\e,P}}{\partial \e}  + D_{P} \varphi_{\e,P} \frac{d P_{\e}}{d\e}\Big) q   \right)\Big|_{P=P_{\e}} \nonumber \\
 &= O(\e)+ (D_{v} \exp_{P_{\e}}^{g})\big|_{v= \e(1+ \varphi_{\e,P_{\e}}(q)) q} \big(q+O(\e) \big), \label{eq:dFde}
\end{align}
where in the second line we used  Proposition \ref{303} and \eqref{eq:dPede}. 
\\The claim \eqref{eq:claimdFde} follows by combining \eqref{eq:estgHn} with \eqref{eq:dFde}.
\\Since from Step 2 we know that ${\rm d}(P_{\e}, P_{0})\leq C\e^{2}$,  the estimate in \eqref{eq:claimdFde} ensures that $F(S^2\times (0,\varepsilon_0))=U\setminus\{P_{0}\}$ and that $F$ is a diffeomorphism onto its image; in other words, $F $ induces a foliation of $U\setminus\{P_{0}\}$  by the area-constrained Willmore spheres $\Sigma_{\e}=F(S^{2}\times\{\e\})$.
 \\
  
 \textbf{Step 3}.  The foliation is regular at $\e=0$.
 \\Fix a system of normal coordinates of $U$ centered at $P_{0}$, indentify $U$ with  an open  subset of $\R^3$ and call $F_\varepsilon:=\frac{1}{\varepsilon} F(\cdot, \varepsilon):S^2 \to \R^3$.  
 Since ${\rm d}_{g}(P_{\e},P_{0})\leq C \e^{2}$ by Step 2 and $\|\varphi_{\e}\|_{C^{5,\alpha}(S^{2})}\leq C\e^{2}$ by \eqref{eq:propSe}, it follows that  the immersions  $F_\varepsilon$ converge in $C^{5,\alpha}$-norm to the round unit sphere of $\R^3$, as $\varepsilon \searrow 0$. The convergence in $C^{k}(S^{2})$-norm, for every $k\in \N$, follows from a standard bootstrap argument thanks to the ellipticity of $W'(S^{2})$. 
 \\
  
 \textbf{Step 4}.  The foliation is unique among Willmore spheres of energy $<32 \pi$.
\\ Let  $V \subset U$ be  another neighborhood of $P \in M$  such that $V\setminus \{P\}$  is foliated by area-constrained Willmore spheres $\Sigma'_\varepsilon$ having area  $4\pi \varepsilon^2$, $\varepsilon\in (0,\varepsilon_1)$, and satisfying $\sup_{\varepsilon\in (0,\varepsilon_1)} W_{g}(\Sigma'_\varepsilon) < 32 \pi$. 
\\By \cite{LauMon}, for $\e$ small enough, $\Sigma'_\varepsilon$ are normal graphs over geodesic spheres, i.e. there exist $P_{\e}'\in V, \varphi_{\e}'\in C^{5,\alpha}(S^{2})$ such that
$$
\Sigma'_\varepsilon=\Sigma_{\e,P_{\e}'}[\varphi_{\e}'] := \exp_{P_{\e}'}^{g} ( \e S^2[\varphi_{\e}'] ).
$$ 
Since by assumption $\Sigma'_\varepsilon$ is an area-constrained Willmore sphere, by the uniqueness statement in the Implicit Function Theorem, Proposition \ref{303} implies that $\varphi_{\e}'=\varphi_{\e, P_{\e}'}$.
Using again that $\Sigma'_\varepsilon$ is an area-constrained Willmore sphere, we infer that $P_{\e}'$ is a critical point of the reduced functional $V\ni P\mapsto \Phi_{\e}(P):=W_{g}(\Sigma_{\e,P}[\varphi_{\e, P}])$.
But since by assumption $P_{0}$ is a non-degenerate critical point of $\Sc$, the arguments above \eqref{eq:dPede} yield that $\Phi_{\e}$ has a unique critical point in $V$. We conclude that $P_{\e}'=P_{\e}$ and thus 
 $\Sigma_\varepsilon=\Sigma'_\varepsilon$ for  $\varepsilon$ small enough.
% \\
% 
%  \textbf{Step 5}.  The foliation $F$ can be obtained by a smooth deformation of the foliation of $\R^3\setminus\{0\}$ by  round-spheres centered at $0$.
%  \\For $\tau\in [0,1]$ consider the linear interpolation metric  $g^{(\tau)}_{ij}:=(1-\tau) g_{ij}+\tau \delta_{ij}$ on the neighbourhood $U$ and use normal coordinates centred at $P_{0}\in U$. Then for every $\tau\in [0,1]$, the metric $g^{(\tau)}$ is a perturbation of the Euclidean metric and we are in a perturbative regime. Write
%\begin{equation}\label{eq:LinScal}
%\Sc_{g^{(\tau)}}=(1-\tau)L(h)
%\end{equation}  
%Since $P_{0}$ is a non-generate critical point of  $\Sc_{g}=\Sc_{g^{(0)}}$, the linearised part of \eqref{eq:LinScal} has a critical point at $P_{0}$ too.
%It follows (HERE IS THE PROBLEM!) that $P_{0}$ is a non-degenerate critical point of  $\Sc_{g^{(\tau)}}$ for every $\tau\in [0,1)$ with $\|\nabla^{2} \Sc_{g^{(\tau)}}(P_{0})\|\to 0$ as $\tau \to 1$.
%

\end{proof}

%%%%%%%%%%%%%%%%%%%%%%%%%%%%%%%%%%%%%%%%%%%%%%%%%%

\end{document}